\newtheorem{proposition}{Proposition}[section]
\newtheorem{theorem}[proposition]{Theorem}
\newtheorem{corollary}[proposition]{Corollary}
\newtheorem{lemma}[proposition]{Lemma}
\newtheorem*{theorem*}{Theorem}
\newtheorem*{proposition*}{Proposition}
\newtheorem*{lemma*}{Lemma}
\newtheorem*{corollary*}{Corollary}
\newtheorem{question*}{Question}
\newtheorem*{rep@theorem}{\rep@title}
\newcommand{\newreptheorem}[2]{
\newenvironment{rep#1}[1]{
 \def\rep@title{#2 \ref{##1}}
 \begin{rep@theorem}}
 {\end{rep@theorem}}}
\newtheorem*{rep@proposition}{\rep@title}
\newcommand{\newrepproposition}[2]{
\newenvironment{rep#1}[1]{
 \def\rep@title{#2 \ref{##1}}
 \begin{rep@proposition}}
 {\end{rep@proposition}}}
\theoremstyle{definition}
\newtheorem{definition}[proposition]{Definition}
\newtheorem{question}[proposition]{Question}
\newtheorem{remark}[proposition]{Remark}
\newtheorem{example}[proposition]{Example}
\newcommand{\bdry}{\partial}
\newcommand{\Q}{\mathbb{Q}}
\newcommand{\Z}{\mathbb{Z}}
\newcommand{\C}{\mathcal{C}}
\newcommand{\CC}{\mathbb{C}}
\newcommand{\A}{\mathcal{A}}
\newcommand{\AC}{\mathcal{AC}}
\newcommand{\rk}{\operatorname{rk}}
\newcommand{\Bl}{\mathcal{B}\ell}
\newcommand{\Qt}{{\Q[t^{\pm 1}]}}
\newcommand{\Zt}{{\Z[t^{\pm 1}]}}
\begin{document}
\title[Rational Concordance of Double Twist Knots]{Rational Concordance of Double Twist Knots}

\author{Jaewon Lee}
\address{Department of Mathematical Sciences, Korea Advanced Institute for Science and Technology}
\email{freejw@kaist.ac.kr}

\date{\today}

\def\subjclassname{\textup{2020} Mathematics Subject Classification}
\expandafter\let\csname subjclassname@1991\endcsname=\subjclassname
\subjclass{57K10}

\begin{abstract} 
  Double twist knots $K_{m, n}$ are known to be rationally slice if $mn = 0$, $n = -m\pm 1$, or $n = -m$. In this paper, we prove the converse. It is done by showing that infinitely many prime power-fold cyclic branched covers of the other cases do not bound a rational ball. Our rational ball obstruction is based on Donaldson's diagonalization theorem.
\end{abstract}

\maketitle
\section{Introduction}
Two knots in $S^3$ are called \textit{concordant} if they cobound a smoothly and properly embedded annulus in $S^3\times I$. If a knot $K$ is concordant to the unknot, then $K$ is said to be \textit{slice}, meaning it bounds a smooth disk in $B^4$. The concordance relation with the connected sum operation gives rise to an abelian group $\C$, called the \textit{knot concordance group}, where the inverse of $K$ is represented by the orientation-reversed mirror image of $K$. Knot concordance has been extensively studied with deep interactions in 3- and 4-dimensional topology since Fox and Milnor initiated.

Levine \cite{Lev69} developed a simple algebraic tool to study knot concordance by using a bilinear form, called the \textit{Seifert form}, obtained from a Seifert surface. If a knot has a metabolic Seifert form, it is called \textit{algebraically slice}. Since every slice knot has a metabolic Seifert form, there is a quotient map $\C\rightarrow\AC$, where the quotient group $\AC$ is called \textit{algebraic concordance group}. Levine also proved that $\AC\cong \Z^\infty\oplus\Z_2^\infty\oplus\Z_4^\infty$. Sliceness and algebraic sliceness can be similarly defined for high-dimensional knots. In fact, it turned out that every even dimensional knot is slice \cite{Ker65}, and every higher odd dimensional knot that is algebraically slice is slice \cite{Lev69}.

It was not known whether there exists an algebraically slice knot that is not slice in the classical dimension until Casson and Gordon \cite{CG78} found such knots. Their examples are some \textit{twist knots} $K_n$ described in Figure \ref{fig-Kn} with $n\in \Z$ full-twists. More precisely, while it was known that $K_n$ is algebraically slice if and only if $n = k(k-1)$ for some integer $k$ by Levine \cite{Lev69}, Casson and Gordon \cite{CG78} proved that the only slice twist knots are $K_0$ and $K_2$, namely the unknot and the stevedore knot.

\begin{figure}[h]
  \includesvg{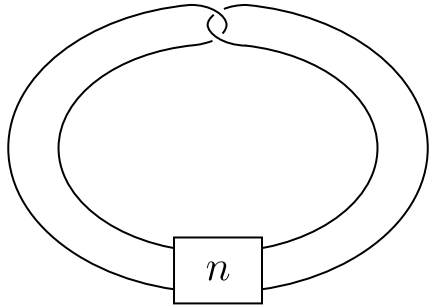}
  \caption{The twist knot $K_n$.}
\label{fig-Kn}
\end{figure}

On the other hand, there is a generalized notion of sliceness by replacing $B^4$ with a smooth rational ball. A knot which bounds a smooth disk in some rational ball is called \textit{rationally slice}. Modulo rational sliceness, we obtain the quotient group $\C_\Q$ of $\C$, called the \textit{rational knot concordance group}. By definition, every slice knot is rationally slice, but the converse is not true. Based on \cite{FS84}, Cochran proved that the figure-eight knot $K_1$, which is not even algebraically slice, is rationally slice. Thus, it is natural to ask which twist knots are rationally slice.

When $n < 0$, the twist knot $K_n$ has signature $\sigma(K_n) < 0$. Since the signature and Levine-Tristram signature function are rational concordance invariants \cite{CO93, CK02}, it is easy to see that $K_n$ for negative $n$ is not rationally slice. In fact, there are many rational concordance invariants from knot Floer homology package such as $\tau$ \cite{OS03a, Ras03}, $\epsilon$ \cite {Hom14}, $\nu^+$ \cite{HW16}, and $\Upsilon_K$ \cite{OSS17}. When $n > 0$, however, all the mentioned invariants vanish because $K_n$ is \textit{$0$-bipolar}, i.e., it bounds a smooth nullhomologous disk in both positive and negative definite simply-connected smooth 4-manifolds \cite{CHH13, CK21}.

On the other hand, as a rational analogue of $\AC$, Cha \cite{Cha07} defined the \textit{rational algebraic concordance group} $\AC_\Q$, which is a quotient group of $\C_\Q$. It was previously known that some $K_n$ for $n>0$ are not rationally slice \cite{Cha07, BD12, CFHH13} by showing that they are not algebraically rationally slice. However, the rational concordance classification of all twist knots has remained unsolved.

In this paper, we answer the question by showing that the unknot $K_0$, the figure-eight knot $K_1$, and the stevedore knot $K_2$ are the only rationally slice twist knots. In fact, we determine the rational concordance of a more general family, called the \textit{double twist knots} $K_{m,n}$ in Figure \ref{fig-Kmn} with $m,n\in \Z$ full-twists. Note that the twist knot $K_n$ is the same as the double twist knot $K_{-1,n}$. When either $m$ or $n$ is $0$, $K_{m,n}$ is the unknot. It was known that $K_{m, n}$ is slice if $mn = 0$ or $|m + n| = 1$ by Siebenmann \cite{Sie75}, and Matsumoto and Yamada \cite{MY87} proved that the converse holds as well.

\begin{figure}[h]
  \includesvg{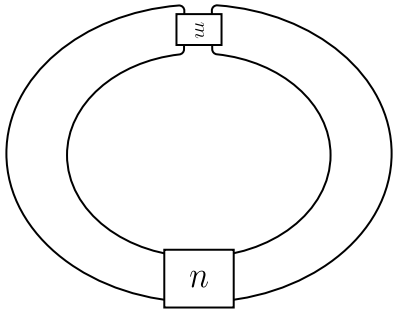}
  \caption{The double twist knot $K_{m,n}$.}
\label{fig-Kmn}
\end{figure}

\noindent It is also known that $K_{m,-m}$ is rationally slice \cite{Cha07}. We prove that $K_{m, n}$ for $mn=0$ or $|m+n| \le 1$ are the only rationally slice double twist knots:

\begin{theorem}\label{thm-A}
  The following are equivalent:
  \begin{itemize}
    \item[(a)] $K_{m,n}$ is rationally slice,
    \item[(b)] $mn = 0$ or $|m + n| \le 1$,
    \item[(c)] $K_{m,n}$ is of finite order in $\C_\Q$.
  \end{itemize}
\end{theorem}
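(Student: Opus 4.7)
The implication (a) $\Rightarrow$ (c) is immediate, since a rationally slice knot represents the identity, hence is of order one, in $\C_\Q$. The implication (b) $\Rightarrow$ (a) collects existing results: if $mn = 0$ then $K_{m,n}$ is the unknot; if $|m+n| = 1$ then $K_{m,n}$ is smoothly slice by Siebenmann~\cite{Sie75}; and if $m+n = 0$ then $K_{m,n}$ is rationally slice by Cha~\cite{Cha07}. The substantive content is therefore (c) $\Rightarrow$ (b): assuming $mn \neq 0$ and $|m+n| \geq 2$, the plan is to show $K := K_{m,n}$ has infinite order in $\C_\Q$.

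I would proceed by contradiction. Suppose $K$ has order dividing some $N \geq 1$, so that $\#^N K$ is rationally slice. A standard rational-concordance principle, tracing back to Casson--Gordon and invoked in several of the references cited in the introduction, states that for every prime power $p^k$ the $p^k$-fold cyclic branched cover of a rationally slice knot bounds a smooth rational homology $4$-ball; since cyclic branched covers distribute over connected sum, $\#^N \Sigma_{p^k}(K)$ must then bound a rational ball $W$. To reach a contradiction, I would construct, for each admissible $(m,n)$, an infinite family of prime powers $p^k$ for which no such $W$ can exist, for any $N \geq 1$.

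The obstruction I intend to use comes from Donaldson's diagonalization theorem. Because $K_{m,n}$ is a $2$-bridge knot, its cyclic branched covers admit explicit negative definite plumbing fillings $X_{p^k}$ with $\partial X_{p^k} = -\Sigma_{p^k}(K_{m,n})$, read off from the continued fraction expansion of the knot. Gluing $N$ copies of $X_{p^k}$ to the hypothetical rational ball $W$ produces a closed, smooth, negative definite $4$-manifold $Z$, whose intersection form must embed as a sublattice of a standard negative diagonal lattice. In particular, the direct sum of $N$ copies of the intersection lattice of $X_{p^k}$ must admit such an embedding, up to a correction controlled in size by $|H_1(\Sigma_{p^k}(K);\Z)|^N$. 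The main obstacle is lattice-theoretic: the multiplicity $N$ is unknown and potentially large, so the obstruction must scale with $p^k$ rather than $N$. The plan is to choose $p^k$ as a function of $(m,n)$ so that a distinguished characteristic covector of $X_{p^k}$ has self-pairing too large, or of the wrong parity, to be accommodated by any diagonal embedding of arbitrarily many copies. The hypothesis $mn \neq 0$, $|m+n| \geq 2$ should be precisely what produces such prime powers in infinite supply, whereas the cases in (b) are exactly those in which the corresponding plumbings do admit the required embeddings and no obstruction arises.
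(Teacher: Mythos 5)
Your proposal gets the easy directions right, but the core of the hard direction (c)~$\Rightarrow$~(b) rests on a claim that is false, and the paper's actual argument is built around avoiding exactly that pitfall.

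You write that ``for every prime power $p^k$ the $p^k$-fold cyclic branched cover of a rationally slice knot bounds a smooth rational homology $4$-ball,'' citing this as a standard Casson--Gordon-type principle. This is not true: the Casson--Gordon statement applies to (integrally) slice knots, because $B^4$ is a $\Z_p$-ball for every prime $p$; a rational ball $V$ is a $\Z_p$-ball only for primes $p$ not dividing $|H_1(V;\Z)|$. The figure-eight knot $K_{-1,1}$ is rationally slice, yet its double branched cover is the lens space $L(5,2)$, which famously does not bound a rational ball. So your proposed contradiction would already ``rule out'' the rationally slice figure-eight knot, which shows the premise is broken. The paper addresses this precisely with its Theorem~\ref{thm-B}: only for primes $p$ large enough (not dividing $|H_1(V;\Z)|$) can one conclude that $\Sigma_{p^k}(K)$ bounds a rational ball, and the proof of this is an explicit Milnor exact sequence argument with $\Z_p$ coefficients, not a citation. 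Since the rational ball $V$ and hence the excluded primes depend on $K$ (and also on the unknown connect-sum multiplicity $N$), the obstruction must work for an infinite set of primes; this is a genuine extra requirement that the Donaldson obstruction in the paper is engineered to satisfy for all odd $p$.

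A second issue is your claim that each $\Sigma_{p^k}(K_{m,n})$ carries ``explicit negative definite plumbing fillings \ldots read off from the continued fraction expansion.'' That is correct for $p^k = 2$ (where the double cover is a lens space), but for $p^k > 2$ the cyclic branched covers of $2$-bridge knots are not lens spaces and have no canonical negative definite plumbing description coming from the continued fraction. The paper instead obtains a negative definite filling by blowing up the $n$ positive twists so that $K_{m,n}$ bounds a nullhomologous disk $\Delta$ in punctured $n\,\overline{\mathbb{CP}}^2$, and then taking the $p$-fold cover branched over $\Delta$; negativity of the result uses the $G$-signature theorem together with vanishing of the Levine--Tristram signature function, which holds precisely because $K_{m,n}$ with $mn<0$ is $0$-bipolar. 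Your plan omits the $0$-bipolarity ingredient entirely, and without it there is no control on the definiteness of the filling for $p > 2$. Finally, you propose to choose $p^k$ as a function of $(m,n)$ via a characteristic-covector argument; the paper's Lemma~\ref{lem-main} avoids any such choice by showing directly that $\oplus_N Q_p(m,n)$ fails to embed in $\langle -1\rangle^{Nnp}$ for every odd $p$ and every $N$, once $n \ge -m+2$. A covector-style obstruction could conceivably be made to work, but as written your plan would still be blocked by the two structural gaps above.
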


Note that Theorem \ref{thm-A} tells us not only about the rational sliceness of $K_{m,n}$ but also about its rational concordance order in $\C_\Q$. It is not known if there exists any nontrivial torsion elements in $\C_\Q$. It was shown that some twist knots with $n>0$, whose orders in $\AC_\Q$ are $2$, generate an infinite rank subgroup in $\C_\Q$ \cite{Lee24}. We confirm that there are no nontrivial torsion elements among the double twist knots.

Note that when $m$ and $n$ have the same sign, it can be easily checked that its signature $\sigma(K_{m,n})$ is nonzero so that $K_{m,n}$ is not rationally slice. When $m$ and $n$ have opposite signs, however, it is again $0$-bipolar. Thus, as mentioned above, many rational concordance invariants vanish due to the $0$-bipolarity: \cite{CHH13, CK21}
$$\sigma(K_{m,n}) = \sigma_{K_{m,n}}(\omega) = \tau(K_{m,n})=\epsilon(K_{m,n}) = \nu^+(K_{m,n}) = \Upsilon_{K_{m,n}}(t) = 0.$$ 
Nevertheless, we prove Theorem \ref{thm-A} by employing the lattice embedding obstruction for infinitely many prime power-fold cyclic branched covers based on Donaldson's diagonalization theorem.

Recall that every prime power-fold cyclic branched cover of a slice knot bounds a smooth rational ball \cite{CG78}. To prove that a knot $K$ is not slice, it is enough to show that its double branched cover $Y$ does not bound a smooth rational ball. If $Y$ bounds a smooth definite 4-manifold $W$ and a smooth rational ball, then by Donaldson's diagonalization theorem \cite{Don87}, the intersection form $Q_W$ of $W$ must be embedded in $\langle \pm 1\rangle ^{b_2(W)}$. An embedding of a billinear form into the standard lattice of codimension $0$ is called \textit{a lattice embedding}. Lisca \cite{Lis07a} employed the lattice embedding obstruction on the double branched cover to characterize the sliceness of the $2$-bridge knots $K(p, q)$\footnote{In our notation, for example when $m < 0$ and $n > 0$, $K_{m,n} = K(4mn+1, 2n)$.} \cite{Lis07a} and their concordance order \cite{Lis07b}. See also \cite{GJ11, Lec12, Lec15, FM16, Lis17, AKPR21, Sim23, AMP24, GO25} for the lattice embedding obstructions of the double or 3-fold branched covers of other families of knots and links.

For a rational sliceness obstruction, however, the double branched cover does not suffice. For example, the double branched cover of the figure-eight knot $K_{-1, 1}$, which is rationally slice, does not bound a smooth rational ball. On the other hand, while we cannot conclude that a prime power-fold branched cover of a rationally slice knot bounds a smooth rational ball, we can conclude that such a smooth rational ball exists for sufficiently large prime power-fold branched covers:

\begin{theorem}\label{thm-B}
  If a knot $K$ is rationally slice, then for any sufficiently large prime $p$, the $p^k$-fold cyclic branched cover $\Sigma_{p^k}(K)$ bounds a smooth rational ball.
\end{theorem}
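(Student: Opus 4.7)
The plan is to construct a rational ball bounding $\Sigma_{p^k}(K)$ as a cyclic branched cover of a rationally slice disk. Let $W$ be a smooth rational ball and $D \subset W$ a properly embedded smooth disk with $\partial D = K$, and let $\widetilde W_{p^k}$ denote the $p^k$-fold cyclic branched cover of $W$ along $D$, so that $\partial \widetilde W_{p^k} = \Sigma_{p^k}(K)$. I claim $\widetilde W_{p^k}$ is a rational homology ball for all but finitely many primes $p$ and every $k \geq 1$. The standard branched-cover formula gives $\chi(\widetilde W_{p^k}) = p^k \chi(W) - (p^k-1)\chi(D) = 1$, and the long exact sequence of the pair $(\widetilde W_{p^k}, \partial \widetilde W_{p^k})$ combined with Poincar\'e--Lefschetz duality forces $b_2(\widetilde W_{p^k}; \Q) = b_3(\widetilde W_{p^k}; \Q) = 0$ as soon as $b_1(\widetilde W_{p^k}; \Q) = 0$. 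The problem reduces to controlling $b_1$.

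Set $E(D) = W \setminus \nu(D)$; because $H_1(W; \Q) = 0$, the abelianization $\pi_1 E(D) \to H_1(E(D); \Z)/\mathrm{torsion}$ is $\Z$, and I form the associated infinite cyclic cover $\widetilde{E(D)}_\infty$. The \emph{rational Alexander module} $A(D) := H_1(\widetilde{E(D)}_\infty; \Q)$ is a finitely generated $\Q[t^{\pm 1}]$-module, and a standard argument using $\chi(E(D)) = 0$ and the vanishing of $H_0(E(D); \Q(t))$ shows that it is torsion, with some order $\Delta_D(t) \in \Q[t^{\pm 1}]$. The Wang exact sequence for the $\Z$-cover $\widetilde{E(D)}_\infty \to \widetilde{E(D)}_{p^k}$ yields
$$0 \to A(D)/(t^{p^k}-1)A(D) \to H_1(\widetilde{E(D)}_{p^k}; \Q) \to \Q \to 0,$$
in which the lifted meridian $\tilde\mu$ projects to a generator of the right-hand $\Q$. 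Decomposing $\widetilde W_{p^k} = \widetilde{E(D)}_{p^k} \cup_{D \times S^1} (D \times D^2)$ and applying Mayer--Vietoris kills this $\Q$, giving
$$H_1(\widetilde W_{p^k}; \Q) \cong A(D)/(t^{p^k}-1)A(D),$$
which vanishes precisely when $\Delta_D(t)$ and $t^{p^k}-1$ are coprime in $\Q[t]$, equivalently when no root of $\Delta_D(t)$ is a $p^k$-th root of unity.

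To finish, observe that $\Delta_D(t)$ has only finitely many roots in $\overline{\Q}$, only finitely many of which are roots of unity, and only finitely many of those of prime-power order. Call a prime $p$ \emph{bad} if some root of $\Delta_D(t)$ is a primitive $p^j$-th root of unity for some $j \geq 1$; the bad primes form a finite set depending only on $D$. For every other prime $p$ and every $k \geq 1$, the computation above gives $H_1(\widetilde W_{p^k}; \Q) = 0$, so $\widetilde W_{p^k}$ is a rational ball bounded by $\Sigma_{p^k}(K)$. The main difficulty I anticipate is the clean justification of the isomorphism $H_1(\widetilde W_{p^k}; \Q) \cong A(D)/(t^{p^k}-1)A(D)$ and of the torsion property of $A(D)$: both are classical for $W = B^4$ but require a bit more care here because $\pi_1(W)$ may be nontrivial, so the argument must rest on the rational-homology hypothesis alone rather than on contractibility of $W$.
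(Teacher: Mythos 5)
Your argument is correct in substance, but it takes a genuinely different route from the paper's. You work over $\Q$, relate $H_1(\widetilde W_{p^k};\Q)$ to $A(D)/(t^{p^k}-1)A(D)$ via the Wang/Milnor sequence and Mayer--Vietoris, and exclude finitely many ``bad'' primes determined by which roots of $\Delta_D$ are prime-power roots of unity. The paper instead works over $\Z_p$: it notes that $V$ is a $\Z_p$-homology ball whenever $p\nmid |H_1(V;\Z)|$, runs Milnor's exact sequence for the infinite cyclic cover with $\Z_p$ coefficients, and then applies the Frobenius identity $1-t^{p^k}=(1-t)^{p^k}$ in $\Z_p[t]$ to conclude that $1-t^{p^k}$ is an automorphism of $H_i(\tilde V;\Z_p)$ as soon as $1-t$ is. This makes the ``sufficiently large'' condition completely explicit ($p\nmid|H_1(V;\Z)|$), bypasses the Alexander polynomial and the torsion discussion entirely, and handles all degrees $i$ at once, whereas your approach requires the Euler-characteristic/duality bookkeeping to reduce to $b_1$. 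Your approach has the mild advantage of identifying exactly which primes can fail in terms of the disk's Alexander polynomial.

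Two small points worth tightening in your write-up. First, the torsion claim for $A(D)$ is not quite a consequence of $\chi(E)=0$ and $H_0(E;\Q(t))=0$ alone; the cleanest route is to first show $A(D)/(t-1)A(D)=0$ (Wang sequence in degree one combined with the Mayer--Vietoris computation $H_1(E;\Q)\cong\Q$ generated by the meridian), which rules out a free $\Q[t^{\pm1}]$-summand and hence forces $A(D)$ torsion. Second, this same computation is what guarantees $\Delta_D(1)\neq 0$, which your argument implicitly needs: $t-1$ always divides $t^{p^k}-1$, so if $1$ were a root of $\Delta_D$ every prime would be bad and the finiteness claim would collapse. Once those two points are in place, the proof goes through.
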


To apply Theorem \ref{thm-B} in proving Theorem \ref{thm-A}, we consider prime power-fold cyclic branched covers, denoted $Y$, of $K_{m,n}$ for infinitely many primes. Our obstruction for $Y$ bounding a smooth rational ball is essentially based on Donaldson's diagonalization theorem \cite{Don87}. Recall that the $0$-bipolarity of $K_{m,n}$ makes many rational concordance invariants vanish. Nevertheless, the $0$-bipolarity of $K_{m,n}$ helps us obtain a definite filling $W$ of $Y$ so that we can apply the lattice embedding obstruction. By analyzing a matrix which represents $Q_W$, we prove that every odd prime power-fold branched cover of $K_{m,n}$ does not bound a smooth rational ball unless $mn=0$ or $|m-n| \le 1$. We emphasize that smooth rational ball obstructions on infinitely many branched covers can distinguish rationally non-slice knots among $0$-bipolar knots.

On the other hand, for any nonzero integer $c$, any knot is $\Z\left[\frac{1}{c}\right]$-slice if and only if its $(c, 1)$-cable is $\Z\left[\frac{1}{c}\right]$-slice \cite{CFHH13}. Moreover, this fact implies that the following are equivalent:

\begin{itemize}
  \item A knot is $\Q$-slice.
  \item Its $(c, 1)$-cable for some $c$ is $\Q$-slice.
  \item Its $(c, 1)$-cables for all $c$ are $\Q$-slice.
\end{itemize}

\noindent Note that the double branched cover obstruction is a $\Z_2$-slice obstruction, which means that it obstructs $\Z\left[\frac{1}{c}\right]$-sliceness for any odd $c$. Thus, Lisca's work \cite{Lis07a} implies that the $(c, 1)$-cable of $K_{m,n}$ for each odd $c$ is not $\Z\left[\frac{1}{c}\right]$-slice unless $mn=0$ or $|m-n| = 1$. From the above fact and Theorem \ref{thm-A}, we directly obtain the following corollary about the cable of double twist knots:

\begin{corollary}\label{cor-cable}
  Any $(c, 1)$-cable of $K_{m,n}$ is not rationally slice unless $mn = 0$ or $|m+n|\le 1$.
\end{corollary}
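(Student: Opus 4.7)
The plan is to derive this directly from Theorem~\ref{thm-A} combined with the cabling equivalence stated just before the corollary. The key observation is that the excerpt already records the equivalence (from \cite{CFHH13}) that a knot $K$ is $\Q$-slice if and only if its $(c,1)$-cable is $\Q$-slice for some (equivalently, every) nonzero integer $c$. Thus rational sliceness is preserved and reflected by the $(c,1)$-cabling operation, and the corollary becomes a contrapositive repackaging of Theorem~\ref{thm-A}.

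Concretely, I would argue as follows. Assume for contradiction that the $(c,1)$-cable of $K_{m,n}$ is rationally slice for some nonzero $c$. Applying the ``some $c$ implies ambient $\Q$-sliceness'' direction of the CFHH equivalence, we deduce that $K_{m,n}$ itself is rationally slice. Then Theorem~\ref{thm-A}, specifically the implication $(a)\Rightarrow(b)$, forces $mn=0$ or $|m+n|\le 1$. Contraposing yields exactly the statement of the corollary. No new obstruction is invoked; the only subtlety is making sure the cited equivalence applies for every $c$ including the rationally slice cases $mn=0$ and $|m+n|\le 1$ (where the corollary has nothing to say anyway, so this is vacuous).

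Since this is essentially a one-line deduction, there is no serious obstacle; the main content of the corollary lies in Theorem~\ref{thm-A}, whose proof (via Theorem~\ref{thm-B} and the Donaldson-style lattice embedding obstruction on odd prime-power branched covers) does the real work. One stylistic choice worth flagging is whether to state the proof as a contrapositive or as a direct implication, but in either case the argument fits in a couple of sentences.
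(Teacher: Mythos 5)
Your proposal is correct and matches the paper's own (implicit) proof: the corollary is stated as following ``directly'' from the CFHH cabling equivalence together with Theorem~\ref{thm-A}, exactly as you argue. No further comment needed.
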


Note that Donaldson's diagonalization theorem is a gauge theoretic result and works in the smooth category. On the other hand, one can consider the knot concordance in the topological category by allowing locally flat embedding of slice disks. Recall that the aforementioned algebraic concordance, defined by Levine \cite{Lev69}, is a topological concordance invariant. Beyond the algebraic concordance, however, the topological concordance is much different from the smooth concordance. For example, it is known that the smooth concordance group of the topologically slice knots has a $\Z_2^\infty$ subgroup \cite{HKL16} and a $\Z^\infty$ summand \cite{OSS17, DHST21}. However, it is difficult to obstruct the topological sliceness of a given knot due to the lack of tools that do not rely on smooth structures.

In fact, there are some finer topological concordance invariants than the algebraic concordance. For example, by Freedman's work \cite{Fre82, FQ90}, it turns out that the Casson-Gordon invariant \cite{CG78, CG86} still works well in the topological category. The twisted Alexander polynomial \cite{KL99} can also be used to obstruct topological sliceness. Cochran, Orr, and Teichner \cite{COT03} generalized the fact that the algebraic concordance captures whether the Alexander module admits a Lagrangian submodule with respect to the Blanchfield form. By considering higher Alexander modules and Blanchfield forms, they provided infinitely many stronger topological sliceness obstructions by employing the so-called \textit{von Neumann $\rho$-invariants}, originally defined by Cheeger and Gromov \cite{ChG85}.

Similarly, the rational knot concordance can be also studied in the topological category. Cha \cite{Cha07} introduced the notion of \textit{complexity}\footnote{Cochran, Orr, and Teichner also considered the same notion in \cite{COT03}, called \textit{multiplicity}, but they mainly studied the case when the multiplicity is $1$.} to higher Alexander modules and the corresponding Blanchfield forms to study the structure of the rational knot concordance group $\C_\Q$ in the topological category. While it is not clear that Casson-Gordon invariant and the twisted Alexander polynomial can be also used for a rational slice obstruction or has a rational concordance analogue, Cha \cite{Cha07} employed the von Neumann $\rho$-invariant as a rational sliceness obstruction. See also \cite{CHL09, Kim23, Lee24}.

In the last section, we first characterize the algebraic rational sliceness of all twist knots in Theorem \ref{thm-ACQ}, which extends the aforementioned results \cite{Cha07, BD12, CFHH13}. Then we recover a partial result of Theorem \ref{thm-A} for twist knots $K_n$ in the topological category by employing a von Neumann $\rho$-invariant with complexity.

\begin{theorem}\label{thm-C}
  $K_n$ is topologically rationally slice if and only if $ n = 0, 1,$ or $2$, provided $n\neq 2^2,3^2,\cdots,8^2$.
\end{theorem}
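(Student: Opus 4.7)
The plan is to split Theorem \ref{thm-C} into an algebraic step and an analytic step. First, I would invoke Theorem \ref{thm-ACQ} to enumerate the set $S \subset \Z$ of integers $n$ for which $K_n$ is algebraically rationally slice. Since topological rational sliceness implies algebraic rational sliceness, every $n \notin S$ is immediately ruled out. The remaining task is to show that for each $n \in S \setminus \{0,1,2\}$ lying outside the exceptional set $\{k^2 : 2 \le k \le 8\}$, the knot $K_n$ is not topologically rationally slice.

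For this, I would apply a topological rational sliceness obstruction based on Cha's von Neumann $\rho$-invariants with complexity from \cite{Cha07} (see also \cite{CHL09, Kim23, Lee24}). The obstruction says, roughly: if $K_n$ is topologically rationally slice via a rational slice disk of bounded complexity, then for every metabolizer $P$ of the rational Blanchfield form on the Alexander module $H_1(X_\infty;\Q)$ and every admissible prime-power character $\chi$ on $H_1(\Sigma_2(K_n);\Z)$ that vanishes on a $\Z$-reduction of $P$, the corresponding von Neumann $\rho$-invariant must vanish. Since $K_n$ is $0$-bipolar for $n > 0$, the classical abelian invariants (Levine--Tristram signatures) all vanish, which is why one is forced to use these higher-order Casson--Gordon-type $\rho$-invariants rather than signatures.

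The central calculation is to compute the relevant $\rho$-invariants and verify non-vanishing. The double branched cover $\Sigma_2(K_n)$ is the lens space $L(4n+1, 2n)$, and $\rho$-invariants of cyclic characters on such lens spaces admit closed-form expressions in terms of Dedekind-type sums. I would therefore enumerate the metabolizers of the rational Blanchfield form (a finite list determined by the factorization of $4n+1$), write down the resulting $\rho$-invariants explicitly, and check that they cannot vanish simultaneously for any metabolizer when $n \in S \setminus (\{0,1,2\} \cup \{k^2 : 2 \le k \le 8\})$.

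The hard part will be producing uniform estimates on these $\rho$-invariants across all metabolizers in order to dominate the error terms introduced by Cha's notion of complexity. The exception $n = k^2$ for $2 \le k \le 8$ reflects precisely those finitely many cases where the numerical margin between the computed $\rho$-values and the complexity error is too narrow for the current bounds to conclude; for all other $n \in S \setminus \{0,1,2\}$ the margin is wide enough to derive the desired non-sliceness.
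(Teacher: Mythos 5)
Your first step (invoking Theorem~\ref{thm-ACQ} to reduce to $n=k(k-1)$ or $n=k^2$) matches the paper. But the obstruction you then describe is not the one the paper uses, and it is not clear it would work at all.

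What you outline is a Casson--Gordon-type obstruction: prime-power characters $\chi$ on $H_1(\Sigma_2(K_n);\Z)$ vanishing on a metabolizer, with $\rho$-invariants of the lens space $L(4n+1,2n)$ computed via Dedekind sums. The paper explicitly flags in the introduction that ``it is not clear that Casson--Gordon invariant and the twisted Alexander polynomial can be also used for a rational slice obstruction,'' which is precisely why it does \emph{not} take the double-branched-cover route. A rational slice disk lives in a rational ball $V$, and the standard Casson--Gordon machinery (lifting characters across the branched cover of $B^4$) does not carry over to $V$ without additional input; so your plan has a foundational gap.

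The paper instead works with the $0$-surgery $M_{K_n}$ and the first-order $L^2$-signatures $\rho^{(1)}(K,P)$ of Cochran--Harvey--Leidy, extended to complexity $c$ via $\rho^{(1)}_c(K,P_c)$ (Theorem~\ref{thm-D}). The key technical step is Lemma~\ref{lem-rho-with-complexity}, which identifies $\rho^{(1)}_c(K,P_c)$ \emph{exactly} with $\rho^{(1)}(K,P)$ for the preimage $P$ of $P_c$ under $\otimes_c 1$; there is no ``complexity error term'' to dominate, contrary to what you propose. The argument then splits: for $n=k(k-1)$, strong irreducibility of $kt-(k-1)$ forces $P_c$ to pull back to one of two explicit Lagrangians, whose first-order signatures are the integrals of Levine--Tristram signatures of $-T_{k,k-1}$ computed in \cite{CHL10}; for $n=k^2$, genus one plus failure of algebraic sliceness forces $P=0$, and the nonvanishing of $\rho^{(1)}(K_n,0)$ for $k>8$ is Davis's computation \cite{Dav12a,Dav12b}. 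The exceptional set $\{2^2,\dots,8^2\}$ is not a numerical margin problem in your sense; it is simply the range where Davis's estimate for $\rho^{(1)}(K_n,0)$ has not been verified to be nonzero.
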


The $\rho$-invariant obstruction in \cite[Theorem 4.2]{COT03} typically involves infinitely many choices in practice, and the resulting set of values can be used as a sliceness obstruction, as shown in \cite[Theorem 4.6]{COT03}. To overcome the difficulty to check all such values, Cochran, Harvey, and Leidy \cite{CHL09, CHL10} developed a more practical tool. They associated a single $\rho$-invariant for each Lagrangian submodule $P$ of the Alexander module, called the \textit{first-order signature} $\rho^{(1)}(K, P)$ and proved that the set of these values suffices to obstruct sliceness \cite[Definition 4.1, Theorem 4.2]{CHL10}. In particular, the first-order signatures always form a finite set for the genus one knots.

Cha \cite{Cha07} employed a $\rho$-invariant with complexity and gave a rational sliceness obstruction in a manner similar to \cite[Theorem 4.6]{COT03}. Based on his construction, we take a Lagrangian submodule $P$ of the Alexander module with complexity $c$ and define the corresponding $\rho$-invariant, denoted $\rho^{(1)}_c(K, P)$, following \cite[Definition 4.1]{CHL10}. We then prove the following rational sliceness obstruction, which is an analogue of \cite[Theorem 4.2]{CHL10} derived from the first-order signature:

\begin{theorem}\cite{Lee24}\label{thm-D}
  If $K$ is rationally slice in a rational ball $V^4$ with complexity $c$, then there exists a Lagrangian submodule $P$ of $\A_c (K)$ such that $\rho_c^{(1)}(K, P)=0.$
For a slice disk $\Delta$, such a submodule $P$ is given by
\begin{equation*}
  P = \ker (\A_c (K)\rightarrow H_1(V-\nu\Delta;\Qt)).
\end{equation*}
\end{theorem}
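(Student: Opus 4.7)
The plan is to mimic the strategy of Cochran--Harvey--Leidy for the first-order signature \cite{CHL10}, adapted to Cha's complexity-$c$ framework. First I would verify that the prescribed submodule $P = \ker(\A_c(K) \to H_1(V - \nu\Delta;\Qt))$ is Lagrangian for the complexity-$c$ Blanchfield form on $\A_c(K)$. This is a ``half lives, half dies'' argument in $\Q$-coefficients: because $V$ is a rational ball, Mayer--Vietoris applied to $V = (V - \nu\Delta) \cup \nu\Delta$, combined with Poincar\'e--Lefschetz duality over $\Qt$, shows that $H_1(V - \nu\Delta;\Qt)$ accounts for exactly half of the $\Qt$-torsion rank of $\A_c(K)$, and that the Blanchfield pairing vanishes on $P$; a dimension count then gives $P^\perp = P$. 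The rational-ball version of this step is essentially the content of \cite{Cha07}, and it extends directly to the complexity-$c$ module $\A_c$.

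Next I would construct the PTFA coefficient system $\phi_P\colon \pi_1(M_K) \to \Gamma_P$ used to define $\rho^{(1)}_c(K, P)$, following \cite[Definition 4.1]{CHL10} but replacing the derived series of the knot group with Cha's $c$-complexity derived quotient. The crucial property is that $\phi_P$ extends to the slice disk exterior $W := V - \nu\Delta$: by the very choice of $P$, the quotient $\A_c(K)/P$ injects into $H_1(W;\Qt)$, which in turn maps into the relevant $c$-complexity derived quotient of $\pi_1(W)$. Hence $\phi_P$ lifts through the inclusion $\pi_1(M_K) \hookrightarrow \pi_1(W)$ to an extension $\Phi\colon \pi_1(W) \to \Gamma_P$.

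I would then evaluate $\rho^{(1)}_c(K, P)$ as an $L^2$-signature defect of $W$. By Cheeger--Gromov--Chang--Weinberger, once $\Phi$ is constructed one has
\begin{equation*}
  \rho^{(1)}_c(K, P) \;=\; \sigma^{(2)}_{\Gamma_P}(W, \Phi) \;-\; \sigma(W).
\end{equation*}
Since $V$ is a rational ball and $\Delta$ is a disk, $H_2(W;\Q) = 0$, so $\sigma(W) = 0$. The heart of the argument is then showing the $L^2$-term also vanishes: the complexity-$c$ hypothesis is engineered precisely so that $H_2(W;\Q\Gamma_P)$, suitably interpreted, is trivial, forcing $\sigma^{(2)}_{\Gamma_P}(W, \Phi) = 0$ via Cha's $L^2$-signature bordism theorem for rational balls \cite{Cha07}.

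The main obstacle is this final $L^2$-signature vanishing. In the integral setting of \cite{CHL10} one uses that $B^4$ carries no rational second homology and that PTFA coefficients preserve this fact; in the rational-ball setting one has only $H_*(V;\Q) = H_*(B^4;\Q)$, and must control how the $c$-complexity derived series of $\pi_1(W)$ interacts with the bordism class of $W$ in the appropriate $\Gamma_P$-equivariant $L^2$-signature bordism group. Showing that the complexity parameter $c$ is exactly what is needed to absorb the extra torsion in $H_*(V;\Z)$ is the place where the argument most substantially departs from the standard first-order signature proof, and is the technical step that makes Theorem \ref{thm-D} genuinely a rational rather than integral statement.
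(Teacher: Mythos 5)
Your high-level route matches the paper's: take $W = V - \nu\Delta$, identify $P$ as the kernel of the inclusion-induced map on $\A_c$, extend the coefficient system over $W$, and then exhibit $\rho_c^{(1)}(K,P)$ as the $L^2$-signature defect of $(W,\psi)$. But you have the emphasis inverted, and that inversion points at a real gap in your understanding of where the work lies.

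You present the final $L^2$-signature vanishing as ``the heart of the argument'' and claim that the complexity parameter $c$ is ``exactly what is needed to absorb the extra torsion in $H_*(V;\Z)$'' so that some $\Gamma_P$-equivariant bordism machinery from \cite{Cha07} kills $\sigma^{(2)}(W,\Phi)$. That is not what happens. The disk exterior $W$ satisfies $H_*(W;\Q)\cong H_*(S^1\times B^3;\Q)$ exactly as in the integral case, so $\sigma(W)=0$ and $\sigma^{(2)}(W,\psi)=0$ for any PTFA coefficient system on $W$ by the same standard vanishing used for $B^4 - \nu\Delta$; neither the torsion of $H_*(V;\Z)$ nor the parameter $c$ enters this step at all. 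The genuinely new ingredient, and the thing the paper actually proves, is that the map $\phi_{c,P}$ on $\pi_1(M_K)$ sits under a coefficient system on $\pi_1(W)$ to which the subgroup property (Lemma~\ref{lem-subgp-property}) applies. The paper does this with a careful diagram chase: it replaces $\pi_1(W)^{(2)}$ by the \emph{rational} derived subgroup $\pi_1(W)^{(2)}_\Q$, observes that the defining choice of $P=\ker j_c$ makes the induced map $\alpha$ on the $\pi_1^{(1)}$-quotients injective, that the map $\gamma = (\times c)\colon H_1(M)\to H_1(W)/\text{torsion}$ is injective, and concludes that the middle map $\beta$ is injective, so $\rho(M,\phi_{c,P}) = \rho(M,\psi\circ j_*) = \sigma^{(2)}(W,\psi)-\sigma(W)$. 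The parameter $c$ lives precisely in the arrow $\pi_1(M)\to\Z\xrightarrow{\times c}\Z$ that extends over $\pi_1(W)$ and in the module $\A_c$, and the rational derived series is what replaces the ordinary one to accommodate torsion in $\pi_1(W)^{ab}$. Your proposal compresses all of this into ``$\A_c(K)/P$ injects into $H_1(W;\Qt)$, which in turn maps into the relevant $c$-complexity derived quotient of $\pi_1(W)$,'' which is the conclusion, not an argument; you would need the five-lemma-style diagram to justify the injectivity that the subgroup property requires. In short: the $L^2$-vanishing you worry about is routine, and the step you wave past is where the content of the rational/complexity modification actually sits.

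One smaller note: your ``half lives, half dies'' verification that $P$ is Lagrangian is correct in spirit and is cited to \cite{Cha07}; the paper takes this for granted and focuses the written proof entirely on the coefficient-system extension.
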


This theorem was initially mentioned in \cite{Lee24} for the completeness of the preliminaries and was briefly proved, although it was not used there. In this paper, we provide a more detailed proof and use it to prove part of Theorem \ref{thm-C}.

Finally, we remark on the Dehn surgery along twist knots in the smooth category. Recall that the $0$-surgery $S^3_0(K)$ along a knot $K$ bounds a rational homology $S^1\times B^3$ if and only if $K$ is rationally slice \cite{CFHH13}. Therefore, we obtain the following immediate corollary:

\begin{corollary}
  $S^3_0(K_n)$ bounds a smooth rational homology $S^1\times B^3$ if and only if $n = 0, 1, 2$.
\end{corollary}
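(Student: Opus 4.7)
The plan is to deduce this corollary directly by combining Theorem \ref{thm-A} with the characterization of rational sliceness via $0$-surgery from \cite{CFHH13}. Specifically, I would invoke the stated equivalence that for any knot $K$, the $0$-surgery $S^3_0(K)$ bounds a smooth rational homology $S^1\times B^3$ if and only if $K$ is rationally slice. This reduces the corollary to determining which twist knots $K_n$ are rationally slice.

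To do this, I would identify the twist knot $K_n$ with the double twist knot $K_{-1, n}$, as noted in the introduction. Applying Theorem \ref{thm-A} with $m = -1$, we see that $K_n = K_{-1,n}$ is rationally slice if and only if $(-1)\cdot n = 0$ or $|-1 + n|\le 1$. The first condition forces $n = 0$, while the second condition is equivalent to $n \in \{0, 1, 2\}$. Thus the set of rationally slice twist knots is exactly $\{K_0, K_1, K_2\}$, and the corollary follows at once. There is no real obstacle here since both ingredients are in place; the only thing to verify is the harmless index bookkeeping $K_n = K_{-1,n}$ and the unpacking of the condition $|m+n|\le 1$ when $m=-1$.
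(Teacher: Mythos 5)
Your proposal is correct and matches the paper's own argument exactly: the sentence immediately preceding the corollary invokes the \cite{CFHH13} equivalence between $S^3_0(K)$ bounding a smooth rational homology $S^1\times B^3$ and $K$ being rationally slice, and the conclusion then follows from Theorem \ref{thm-A} applied to $K_n = K_{-1,n}$. The index bookkeeping ($mn=0$ gives $n=0$; $|-1+n|\le 1$ gives $n\in\{0,1,2\}$) is carried out correctly.
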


Similarly, the homology spheres obtained by $\pm 1$-surgery along a rationally slice knot bound a smooth rational balls. However, the converse does not hold. For example, $S^3_1(K_4)$ bounds a smooth contractible 4-manifold \cite{Fic84}, and $S^3_1(K_3)$ \cite{AL18} and $S^3_{-1}(K_n)$ for all $n$ \cite{Sim21} bound smooth rational balls. It is not known if the $1$-surgery $S^3_1(K_n)$ along $K_n$ for $n > 4$ bounds a smooth rational ball. For example, by the $0$-bipolarity of $K_n$ for $n > 0$, the Ozsv\'ath-Szab\'o correction term $d$-invariant \cite{OS03b} vanishes. It is well known that $S^3_1(K_n)$ is the Brieskorn homology sphere $\Sigma(2, 3, 6n+1)$ for positive $n$. We close the introduction with the following classical question:

\begin{question}\cite[Problem 4.2]{Kir97}\cite[Problem Z]{Sav24}
  Which $\Sigma(2, 3, 6n+1)$ bounds a smooth rational ball? 
\end{question}

\subsection*{Organization} In Section 2, we provide a rational slice obstruction using infinitely many branched covers, as stated in Theorem \ref{thm-B}. In Section 3, we obtain a definite filling of the $p^k$-fold branched cover of $K_{m,n}$ for each prime $p$ and prove Theorem \ref{thm-A} by analyzing their intersection forms. In Section 4, we study the rational concordance in the topological category and provide appropriate obstructions for twist knots including Theorem \ref{thm-D} and prove Theorem \ref{thm-C}.
\subsection*{Acknowledgement}
The author appreciates his advisor, JungHwan Park, for his constant support and valuable guidence. The author would also like to thank Marco Golla for suggesting a nice idea that helped merge Lemma \ref{lem-main} with what was previously Lemma 3.6, thereby shortening the proof. Taehee Kim provided important comments as well. The author is also grateful to his colleagues Dongjun Lee, Seungyeol Park, and O{\u{g}}uz \c{S}avk for reading the draft and offering helpful feedback. This work is partially supported by Samsung Science and Technology Foundation (SSTF-BA2102-02) and the NRF grant RS-2025-00542968.

\section{Rational slice obstruction via branched covers}\label{sec-2}
In this section, we briefly recall a slice obstruction via a branched cover and provide a rational slice obstruction Theorem \ref{thm-B} in a similar way.
\begin{theorem}\cite{CG78}\label{thm-CG-pk}
  If a knot $K$ is slice, then, for any prime $p$, the $p^k$-fold cyclic branched cover $\Sigma_{p^k}(K)$ bounds a smooth rational ball.
\end{theorem}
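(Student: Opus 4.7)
The plan is to exhibit $\Sigma_{p^k}(K)$ as the boundary of a rational ball built from a slice disk. I would choose a smooth slice disk $D \subset B^4$ with $\partial D = K$, set $X = B^4 \setminus \nu D$, and let $W$ be the $p^k$-fold cyclic cover of $B^4$ branched along $D$, so that $\partial W \cong \Sigma_{p^k}(K)$. The branched cover formula gives $\chi(W) = p^k\chi(B^4) - (p^k - 1)\chi(D) = 1$, matching a rational ball; once $H_1(W;\Q) = 0$ is established, the Euler characteristic, Poincar\'e--Lefschetz duality, and connectedness of $W$ with nonempty boundary force the remaining rational Betti numbers to vanish as well.

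For $H_1(W;\Q) = 0$, I would first run Mayer--Vietoris on $B^4 = X \cup \nu D$ (with intersection $D \times S^1 \simeq S^1$) to obtain $X \simeq_\Q S^1$, with $H_1(X;\Z)\cong\Z$ generated by a meridian of $D$. Let $\tilde X \to X$ be the infinite cyclic cover and $X_{p^k} \to X$ the intermediate $p^k$-fold cyclic cover associated to $\pi_1(X) \twoheadrightarrow \Z \twoheadrightarrow \Z/p^k$; then $W \cong X_{p^k} \cup_\partial (D \times D^2)$, with the $S^1$ factor wrapping $p^k$ times. The Wang sequence for $\tilde X \to X$ shows that the Alexander module $\A := H_1(\tilde X;\Q)$ is a finitely generated $\Qt$-torsion module on which $t - 1$ acts invertibly, and the Wang sequence for $\tilde X \to X_{p^k}$ then gives
\[
0 \to \A/(t^{p^k}-1)\A \to H_1(X_{p^k};\Q) \to \Q \to 0.
\]
A second Mayer--Vietoris computation shows that attaching $D \times D^2$ kills exactly the right-hand $\Q$ (both the Wang contribution and the Mayer--Vietoris kernel are generated by the meridian of $D$), so the problem reduces to showing that $(t^{p^k}-1)/(t-1) = \prod_{j=1}^{k}\Phi_{p^j}(t)$ acts invertibly on $\A$, equivalently that no cyclotomic factor $\Phi_{p^j}$ with $j \ge 1$ divides the characteristic polynomial of $t$ on $\A$.

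This final coprimality step is the main obstacle and the place where the prime-power hypothesis is essential. To handle it, I would use that the characteristic polynomial of $t$ on $\A$ divides the Alexander polynomial $\Delta_K(t)$ (since $\tilde X$ is built from the infinite cyclic cover of $E(K)$ by attaching cells of dimension $\ge 2$, which only impose further relations on the Alexander module), together with the Fox--Milnor condition: since $K$ is slice, $\Delta_K(t) \doteq f(t)f(t^{-1})$ for some $f \in \Qt$. If $\Phi_{p^j} \mid \Delta_K$ for some $j \ge 1$, then because the roots of $\Phi_{p^j}$ are closed under $t \mapsto t^{-1}$, we must have $\Phi_{p^j}^2 \mid \Delta_K$, and evaluating at $t = 1$ forces $p^2 \mid \Delta_K(1) = \pm 1$, a contradiction. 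For $n$ not a prime power the same line of reasoning fails because $\Phi_n(1) = 1$, in keeping with the well-known fact that general cyclic branched covers of slice knots need not bound rational balls.
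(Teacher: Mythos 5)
Your argument is correct, but it follows the older $\Q$-coefficient route rather than the paper's. The paper (in the proof of Theorem~\ref{thm-B}, specialized to $V=B^4$) runs Milnor's Wang-type exact sequence with $\Z_p$-coefficients instead of $\Q$-coefficients, and that one change collapses your hardest step: in characteristic $p$ one has $1-t^{p^k}=(1-t)^{p^k}$, so invertibility of $1-t$ on $H_i(\tilde X;\Z_p)$ (which follows, exactly as you argue, from $H_i(X;\Z_p)\cong H_i(S^1;\Z_p)$) gives invertibility of $1-t^{p^k}$ for free. There is then no need to factor out cyclotomic polynomials, no need to invoke Fox--Milnor, and no need to compare the Alexander module of the slice disk complement with that of the knot; the prime-power hypothesis enters only through the Frobenius identity, which makes its role transparent. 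Your $\Q$-coefficient version works but is longer and obscures why the prime power is needed.

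Two smaller points on your write-up. First, the parenthetical justification that ``$\tilde X$ is built from the infinite cyclic cover of $E(K)$ by attaching cells of dimension $\ge 2$'' is not accurate as stated: $E(K)$ sits inside $\partial X = M_K$, not as a subcomplex from which $X$ is built by attaching higher cells, and the surjectivity $\A(K)\twoheadrightarrow H_1(\tilde X;\Q)$, while true and standard in the Casson--Gordon/Levine framework, requires a genuine argument rather than this slogan. Second, once you have that divisibility, the Fox--Milnor squaring is unnecessary: $\Phi_{p^j}$ is monic with $\Phi_{p^j}(1)=p$, so $\Phi_{p^j}\mid\Delta_K$ in $\Zt$ already forces $p\mid\Delta_K(1)=\pm 1$ by evaluation at $t=1$, a contradiction. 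Passing through $\Delta_K=f(t)f(t^{-1})$ and $p^2$ is harmless but adds nothing.
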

\noindent Such a rational ball filling is obtained by taking the $p^k$-fold branched cover of $B^4$ branched over a slice disk of $K$. Similarly, if $K$ bounds a smooth disk $\Delta$ in a rational ball $V$, one can prove Theorem \ref{thm-B} as a rational slice analogue of Theorem \ref{thm-CG-pk} by taking the prime power-fold branched cover of $V$ branched over the slice disk $\Delta$ for sufficiently large primes.

\begin{proof}[Proof of Theorem \ref{thm-B}]
  Suppose $K$ bounds a smooth slice disk $\Delta$ in a rational ball $V^4$. Note that $V$ is a $\Z_p$-homology ball for any prime $p$ such that $p$ does not divide $|H_1(V;\Z)|$. Let $p$ be any such a prime. Take the $p^k$-fold branched cover $V_{p^k}$ of $V$ branched over $\Delta$. Then $V_{p^k}$ is smooth and bounded by $\Sigma_{p^k}(K)$. It suffices to show that $V_{p^k}$ is a $\Z_p$-homology ball.

  Let $\tilde{V}$ be the infinite cyclic cover of $V-\Delta$. The infinite cyclic group acts on $\tilde{V}$ as a deck transformation. Let $t$ be a generator of the deck transformation group. Then, by \cite{Mil68}, there exists an exact sequence:
  $$\cdots \rightarrow H_i(\tilde{V};\Z_p) \xrightarrow{1-t^{p^k}} H_i(\tilde{V}; \Z_p) \xrightarrow{\hphantom{1-t^p}} H_i(V_{p^k};\Z_p)\rightarrow \cdots.$$
  Similarly, we also have another exact sequence:
  $$\cdots \rightarrow H_i(\tilde{V};\Z_p) \xrightarrow{1-t} H_i(\tilde{V}; \Z_p) \xrightarrow{\hphantom{1-t}} H_i(V;\Z_p)\rightarrow \cdots.$$
  Since $H_i(V;\Z_p) \cong H_i(B^4;\Z_p)$, the map $1-t$ is an isomorphism on $H_i(\tilde{V};\Z_p)$. Then, in $\Z_p$ field, $1-t^{p^k} = (1-t)^{p^k}$ is also an isomorphism on $H_i(\tilde{V};\Z_p)$. Thus, we conclude that $H_i(V_{p^k};\Z_p) \cong H_i(B^4;\Z_p)$.
\end{proof}

\section{Proof of Theorem \ref{thm-A}}\label{sec-3}

In this section, we prove Theorem \ref{thm-A} by applying Theorem \ref{thm-B} proved in the previous section. Thus, it is basically to obstruct a rational homology sphere, which is obtained from the branched cover of a knot, from bounding a smooth rational ball. If a rational homology sphere $Y$ bounds a smooth definite 4-manifold $W$, then there is a chance to use Donaldson's diagonalization theorem \cite{Don87} to obstruct $Y$ bounding a smooth rational ball.

\begin{theorem}\cite{Don87}\label{thm-Donaldson}
  Suppose that a rational homology sphere $Y$ bounds a smooth negative definite 4-manifold $W$. If $Y$ bounds a smooth rational ball, then the intersection form $Q_W$ embeds in $\langle -1 \rangle ^{b_2(W)}$.
\end{theorem}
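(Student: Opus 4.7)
The plan is to reduce to the classical form of Donaldson's diagonalization theorem (for closed smooth $4$-manifolds) by capping off $W$ with a hypothetical rational ball filling $V$ of $Y$ along their common boundary. Explicitly, assume $Y = \partial V$ for some smooth rational ball $V$, and form the closed smooth oriented $4$-manifold $X := W \cup_Y (-V)$.

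First I would analyze the homology of $X$. Since $Y$ is a rational homology sphere, $H_2(Y;\Z)=0$ and $H_1(Y;\Z)$ is finite, so the Mayer--Vietoris sequence yields an injection
$$H_2(W;\Z)\oplus H_2(-V;\Z)\hookrightarrow H_2(X;\Z)$$
with cokernel contained in $H_1(Y;\Z)$, hence finite. Because $V$ is a rational ball, $H_2(V;\Z)$ is torsion, so passing to free parts gives an isometric embedding of finite index
$$\bigl(H_2(W;\Z)/\text{torsion},\, Q_W\bigr) \hookrightarrow \bigl(H_2(X;\Z)/\text{torsion},\, Q_X\bigr),$$
where compatibility of intersection forms follows from the naturality of cup product under inclusion.

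Next I would verify that $X$ is negative definite. Rationally, the Mayer--Vietoris analysis shows $b_2(X)=b_2(W)$, and $Q_X$ restricted to the finite-index image of $H_2(W)$ coincides with $Q_W$, which is negative definite by hypothesis; hence $Q_X$ is negative definite on all of $H_2(X;\Z)/\text{torsion}$. Donaldson's diagonalization theorem applied to $X$ then produces an isomorphism of lattices $Q_X \cong \langle -1\rangle^{b_2(X)} = \langle -1\rangle^{b_2(W)}$. Composing with the finite-index isometric embedding above yields the desired lattice embedding $Q_W \hookrightarrow \langle -1\rangle^{b_2(W)}$.

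The only subtle step is the homological bookkeeping in the gluing argument, namely carefully separating the torsion contributions of $V$ and $Y$ from the free part of $H_2(X;\Z)$ and checking that the Mayer--Vietoris map is an isometry on intersection forms. Aside from this routine verification, the argument is a direct transcription of the classical Donaldson diagonalization theorem to the setting where the closed $4$-manifold is replaced by one with a rational ball boundary component.
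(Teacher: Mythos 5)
Your proof is correct and is precisely the standard derivation of this lattice-embedding criterion from Donaldson's closed-manifold diagonalization theorem: cap $W$ off with $-V$, observe that the resulting closed smooth $4$-manifold $X$ is negative definite with $b_2(X)=b_2(W)$, apply Donaldson, and use the finite-index isometric inclusion $Q_W \hookrightarrow Q_X$ coming from Mayer--Vietoris. The paper does not prove this statement itself but cites \cite{Don87}; your argument is the usual repackaging of that citation and the homological bookkeeping (torsion of $H_1(Y)$ and $H_2(V)$ dropping out on passing to free quotients, definiteness being detected on a finite-index sublattice) is handled correctly.
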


Recall that the double twist knots $K_{m,n}$ with $mn < 0$ are $0$-bipolar. We use the $0$-bipolarity to obtain a negative definite filling of branched covers of such $K_{m,n}$.

The outline of the proof of Theorem \ref{thm-A} is as follows. First, we take the prime power-fold cyclic branched cover $Y$ of $K_{m,n}$ and find its smooth negative definite filling $W$. Second, we analyze how the intersection form $Q_W$ embeds in $\langle -1 \rangle ^{b_2(W)}$ when $n = -m$ and $n = -m + 1$. Third, we obstruct the lattice embedding of the $N$-fold connected sum $\oplus_N Q_W$ when $n \ge -m + 2$. Note that the second step is to help the reader follow the third step easily, rather than being a necessary part of the proof.

\begin{lemma} \label{lem-Wmn}
  Let $p > 2$ be a prime power, $n > 0$ and $m < 0$. The $p$-fold branched cover $\Sigma_p(K_{m,n})$ of $K_{m,n}$ bounds a smooth negative definite 4-manifold $W_p(m, n)$ described in Figure \ref{fig-pkfold}.

\end{lemma}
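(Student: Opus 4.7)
The plan is to realize $W_p(m,n)$ as the $p$-fold cyclic branched cover of an explicit negative definite simply-connected smooth $4$-manifold $N$ in which $K_{m,n}$ bounds a nullhomologous smooth disk, and then to read negative definiteness off the resulting Kirby diagram shown in Figure \ref{fig-pkfold}.

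Since $m<0<n$, the knot $K_{m,n}$ is $0$-bipolar \cite{CHH13, CK21}, so there exists a pair $(N,\Delta)$ with $N$ simply-connected and negative definite, $\partial N = S^3$, and $\Delta\subset N$ a smoothly embedded nullhomologous disk with $\partial \Delta=K_{m,n}$. First I would make this pair explicit: starting from the standard diagram of $K_{m,n}$ with its two twist boxes, each full twist can be resolved via a $(-1)$-blowup combined with appropriate band moves, producing a $4$-manifold $N$ obtained from $B^4$ by finitely many negative blowups. In the boundary of the result, $K_{m,n}$ becomes the unknot and bounds an obvious smooth disk $\Delta$; negative definiteness of $N$ and nullhomology of $\Delta$ are both apparent from the construction.

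Next, set $W_p(m,n)$ to be the $p$-fold cyclic branched cover of $N$ along $\Delta$. This is smooth with $\partial W_p(m,n) = \Sigma_p(K_{m,n})$, since branched covering is functorial on boundaries. Translating the construction into Kirby calculus produces Figure \ref{fig-pkfold}: each $2$-handle of $N$ lifts to $p$ parallel copies whose attaching circles are cyclically linked by the branching, so that the intersection form on $W_p(m,n)$ is a $p$-fold block-cyclic extension of the linking matrix of $N$.

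Finally, I would verify negative definiteness. An Euler characteristic count gives $b_2(W_p(m,n)) = p\cdot b_2(N)$, and the deck transformation $\Z/p$-action decomposes $H_2(W_p(m,n);\CC)$ into character isotypic components, each of which carries a Hermitian form isomorphic to the complexification of $Q_N$, hence negative definite. The main technical obstacle is making the translation to Figure \ref{fig-pkfold} sufficiently explicit to verify negative definiteness uniformly in $p$; nullhomology of $\Delta$ is the essential input that prevents the cyclic averaging from producing any positive eigenvalue.
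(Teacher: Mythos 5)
Your overall plan — blow up the positive twists to produce a negative definite $N$ with a nullhomologous disk $\Delta$ bounding $K_{m,n}$, take the $p$-fold branched cover, and read off Figure \ref{fig-pkfold} — matches the paper through the construction of $W_p(m,n)$. However, the step where you verify negative definiteness contains a genuine error and a missing ingredient.

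The claim that each character isotypic component of $H_2(W_p(m,n);\CC)$ ``carries a Hermitian form isomorphic to the complexification of $Q_N$'' is false. Only the trivial eigenspace is the pullback of $H_2(N;\CC)$; the nontrivial eigenspaces carry forms that depend on how $\Delta$ sits in $N$ (concretely, on the Seifert form of $K_{m,n}$), and in general they need not be definite even when $Q_N$ is. Nullhomology of $[\Delta]$ in $H_2(N,\partial N)$ alone is not sufficient: it controls the cohomological correction term $[\Sigma]\cdot[\Sigma]$ appearing in the $G$-signature formula but does not by itself make the eigenspaces negative definite. The paper closes this gap by gluing $-V$ (the $p$-fold branched cover of $B^4$ over a Seifert surface for $K_{m,n}$) onto $W_p(m,n)$ to produce a \emph{closed} $4$-manifold $X$ — needed because the Atiyah–Singer $G$-signature theorem is stated for closed manifolds — and then applies the Casson–Gordon identity $\sigma(X;k)=\sigma(n\overline{\mathbb{CP}}^2)-2[\Sigma]\cdot[\Sigma]\,k(p-k)/p^2$. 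Even after using $[\Sigma]=0$, one still has $\sigma(W_p(m,n);k)=-n+\sigma(V;k)$, and by Viro's theorem $\sigma(V;k)=\sigma_{K_{m,n}}(\exp(2\pi i k/p))$. The final and indispensable input is that the Levine–Tristram signature function of $K_{m,n}$ vanishes — this is the \emph{other} consequence of $0$-bipolarity that you did not invoke. Without it, the eigenspace signatures $\sigma(W_p(m,n);k)$ are not $-n$, and negative definiteness fails. So you need two distinct uses of $0$-bipolarity (the nullhomologous disk in a negative definite manifold, and vanishing of $\sigma_{K_{m,n}}(\omega)$), plus the closing-up construction; your proposal supplies only the first.
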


\begin{proof}

\begin{figure}[t]
  \includesvg{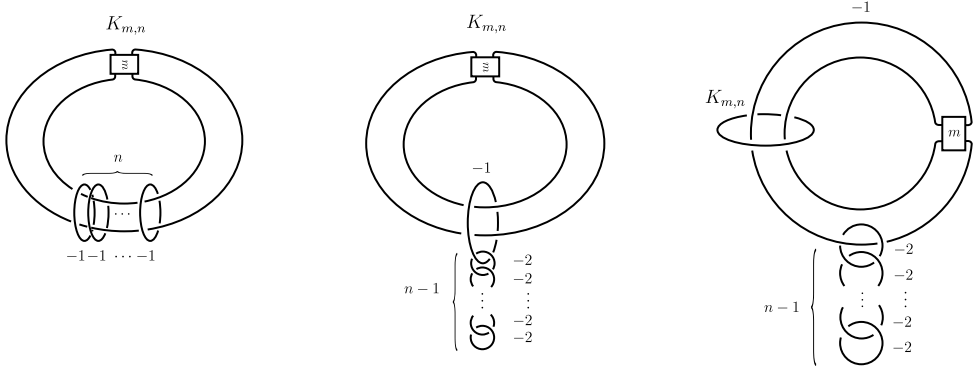}
\caption{$K_{m,n}$ for $n > 0$ bounds a smooth nullhomologous disk in $n\overline{\mathbb{CP}}^2$.}
\label{fig-negdfn}
\end{figure}

Blow up all $n$ positive twists by $(-1)$-framed unknots as illustrated in Figure \ref{fig-negdfn}. Then $K_{m,n}$ bounds a smooth nullhomologous disk $\Delta$ in punctured $n\overline{\mathbb{CP}}^2$, which is negative definite. Let $W_p(m, n)$ be the $p$-fold cyclic branched cover of the punctured $n\overline{\mathbb{CP}}^2$ branched over the disk $\Delta$. It is, of course, bounded by the $p$-fold cyclic branched cover $\Sigma_p(K_{m,n})$ of $S^3$ branched over $K_{m,n}$. By simple Kirby moves and isotopy described in Figure \ref{fig-negdfn}, we obtain the rightmost diagram. From that diagram, we take the $p$-fold branched cover as a 2-handlebody whose Kirby diagram is described in Figure \ref{fig-pkfold}. Since our manifold involves cyclically linked 2-handles, we explain how to get the surgery coefficients and linking numbers with correct signs in detail.

Let $w$ be the writhe of the $(-1)$-framed unknot in the rightmost diagram in Figure \ref{fig-negdfn}. Note that each lift of the $(-1)$-framed unknot is $(-w-1)$-framed. Because each negative twist of two oppositely oriented strands contributes $+2$ to the writhe and we have $|m|$ many negative twists, the framing coefficient of each lift of $(-1)$-framed unknot is $-2|m| - 1 = 2m - 1$. Since two $|m|$-linked strands are directed in opposite directions, the linking number between two linked $(2m-1)$-framed unknots is positive, and hence equal to $|m| = -m$. Thus, we get the 2-handlebody in Figure \ref{fig-pkfold}.

\begin{figure}[t]
  \includesvg[scale=0.9]{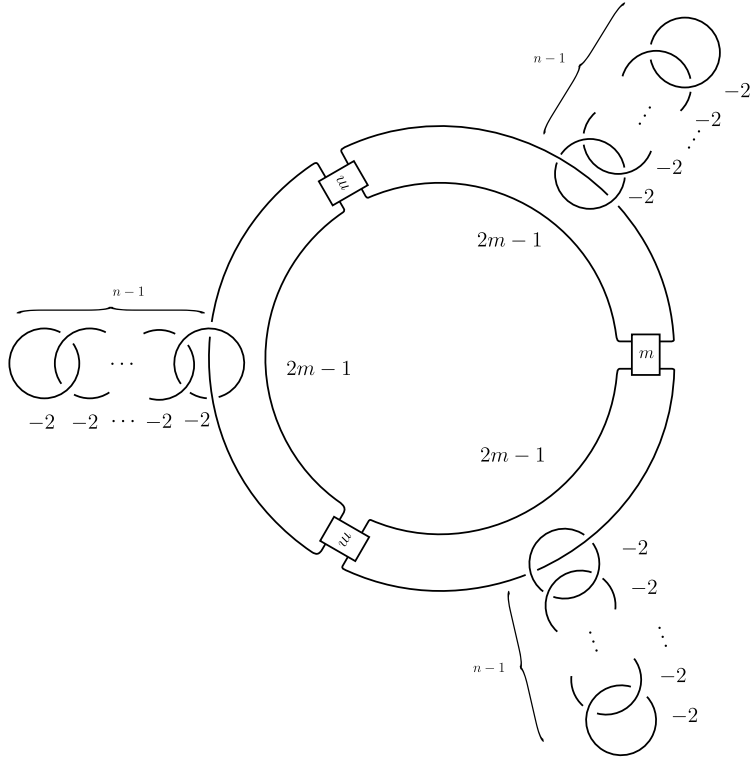}
  \caption{A negative definite filling $W_p(m,n)$ of $\Sigma_p(K_{m,n})$ for $m < 0$, $n > 0$, and $p > 2$.}
\label{fig-pkfold}
\end{figure}

Since $\Delta$ is nullhomologous, by Atiyah-Singer G-signature theorem \cite{AS68}, it can be checked that $W_p(m,n)$ is negative definite. For example, see \cite[Corollary 4.2]{CHH13}. For completeness, we provide a brief proof from \cite{CHH13}.

Take the $p$-fold branched cover $V$ of $B^4$ branched over a Seifert surface $F$ of $K_{m,n}$ and attach $-V$ to $W_p(m, n)$ along $\Sigma_p(K_{m,n}) = \bdry W_p(m,n)$. We obtain a closed smooth 4-manifold $X$ with $\Z_p$-action generated by, say $\tau$. Our $X$ is the $p$-fold branched cover of $n \overline{\mathbb{CP}}^{\smash{2}}$ branched over the closed smooth surface $\Sigma = \Delta \cup F$. Since $\tau^p = id$, the eigenvalues of $\tau$ on $H_2(X;\CC)$ are $\exp(2\pi i k/p)$ for $k = 0, \cdots, p-1$. Let $\sigma(-;k)$ be the signature of the intersection form on the $\exp(2\pi i k/p)$-eigenspace of $H_2(-;\CC)$ with respect to $\tau$. Then,
$$ \sigma(X;k) = \sigma(W_p(m,n);k) - \sigma(V;k).$$
Casson and Gordon \cite{CG78} proved the following identity by applying G-signature theorem: $$\sigma(X;k) = \sigma(n\mathbb{\overline{CP}}^2) - 2[\Sigma]\cdot[\Sigma]\frac{k(p-k)}{p^2}.$$ Since the branched set $\Sigma$ is nullhomologous in $n\overline{\mathbb{CP}}^2$, we have $\sigma(X;k) = \sigma(n \overline{\mathbb{CP}}^{\smash{2}}) = -n$ for each $k$ and hence, $$\sigma(X) = \sum\limits_{k= 0}^{p-1}\sigma(X;k) = -pn.$$ 

To prove that $W_p(m,n)$ is negative definite, we have to show that $\sigma(W_p(m,n)) = -b_2(W_p(m,n))$. Since $-b_2(W_p(m,n)) = -pn = \sigma(X)$, it is enough to check that $\sigma(V;k)$ vanishes for each $k = 0,\cdots, p - 1$ by the above equality by Casson and Gordon. Recall that $V$ is the $p$-fold branched cover of $B^4$ branched over Seifert surface $F$ of $K_{m,n}$. It is well-known \cite{Vir73} that $$\sigma(V;k) = \sigma_{K_{m,n}}(\exp(2\pi i k/p)),$$ where the right-hand side denotes the Levine-Tristram signature function of $K_{m,n}$. Since $K_{m,n}$ is $0$-bipolar, the signature function vanishes, so the proof is complete.
\end{proof}

\noindent Let $p > 2, m < 0,$ and $n > 0$. The intersection form $Q_p(m, n)$ of $W_p(m, n)$ is the $n$-by-$n$-block matrix: 
  $$Q_p(m, n) = \begin{bmatrix}
    Q_p(m,1) & I_p & O &\cdots &\cdots &O \\
    I_p & -2I_p & I_p & O &\cdots &O\\
    O & I_p & -2I_p & I_p & \cdots & O\\
    \vdots & \ddots &\ddots &\ddots &\ddots & \vdots \\
    O & \cdots & O & I_p & -2I_p & I_p\\
    O & \cdots & \cdots & O & I_p & -2I_p
  \end{bmatrix},$$ where each block is a $p$-by-$p$ matrix, $I_p$ is the rank $p$ identity matrix, and $Q_p(m, 1)$ is the following $p$-by-$p$ matrix:
  $$Q_p(m, 1) = \begin{bmatrix}
    2m -1 & -m & 0 & \cdots & 0 & -m\\
    -m & 2m-1 & -m & 0 & \cdots & 0\\
    0 & -m & 2m-1 & -m & \cdots & 0\\
    \vdots & \ddots &\ddots &\ddots & \ddots & \vdots \\
    0 & 0 & \cdots & -m & 2m-1 & -m\\
    -m & 0 & \cdots & 0 & -m & 2m-1\\
  \end{bmatrix}.$$\\

  Recall that when $n= -m + 1$, $K_{m,n}$ is slice and when $n = -m$, $K_{m,n}$ is rationally slice in a $\Z_p$-homology ball for any odd prime $p$. Thus, in those cases, $Q_p(m,n)$ must be embedded into $\langle -1 \rangle ^{np}$. As a warm-up for the obstruction part, we give explicit examples of their embeddings.

  \begin{example}\label{ex-1}
    Let $a_1, \cdots, a_p, b_1, \cdots, b_p, c_1, \cdots, c_{(n-2)p}$ be the standard basis of $\langle -1 \rangle ^{np}$. Suppose $n = -m + 1$. We set $v_i, w_i, x_i,$ and $x_{pk+i}$ up for each $i \in \{1, \cdots, p\} = \Z_p$ and $k = 1,\cdots, n-3$ as:
    \begin{align*}
      v_i &= b_i + c_i + c_{p+i}+ \cdots + c_{(n-3)p + i} - (a_{i + 1} + b_{i+1} + c_{i+1} + c_{p+i+1} +  \cdots + c_{(n-3)p + i + 1}),\\
      w_i &= a_i - b_i, \quad x_i = b_i - c_i, \quad x_{pk+i} = c_{p(k-1)+i} - c_{pk + i}.
    \end{align*} Then, one can check that
    \begin{align*}
      v_i &\cdot v_i = -(n-1) - n = 2m - 1,\\
      v_i &\cdot v_{i \pm 1} = n-1 = -m.
    \end{align*} Moreover, $(\Z^{np}, Q_p(m,n))$ is spanned by $v_1, \cdots, v_p, w_1, \cdots, w_p, x_1, \cdots, x_{(n-2)p}$. Hence, $Q_p(m, -m + 1)$ embeds in $\langle -1 \rangle ^{np}$.

    Similarly, consider the case when $n = -m$. In this case, we restrict our $p$ as a odd prime power $2q + 1$. Set $w_i, x_i$, and $x_{pk+i}$ as the same as before and let $v_i$ be as follows:
\begin{align*}
  v_i = -a_i &+ (a_{i+q} + b_{i+q} + c_{i + q} + c_{p + i + q} + \cdots + c_{p(n-3) + i + q})\\ &- (a_{i+q+1} + b_{i + q + 1} + c_{i + q+1} + c_{p + i + q+1} + \cdots + c_{p(n-3) + i + q+1}). 
\end{align*}
Then $v_1, \cdots, v_p, w_1, \cdots, w_p, x_1,\cdots, x_{(n-2)p}$ span $Q_p(m, -m)$ so that we have an embedding of $Q_p(m, -m)$ in $\langle -1 \rangle ^{np}$.
  \end{example}
\begin{lemma}\label{lem-main}
  Let $p> 2$. If $n \ge -m+2$, then $\oplus_N Q_p(m, n)$ does not embed in $\langle -1 \rangle ^{Nnp}$ for any $N \ge 1$.
\end{lemma}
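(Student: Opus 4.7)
I would suppose for contradiction that $\oplus_N Q_p(m,n)$ embeds in $\langle-1\rangle^{Nnp}$ for some $N\geq 1$ and work backwards from the rigidity this imposes. The first step is a root-system rigidity: for each $(c,i)$ the vectors $v_{2,i}^{(c)},\ldots,v_{n,i}^{(c)}$ form an $A_{n-1}$-system of norm-$(-2)$ roots in $\langle-1\rangle^{Nnp}$, and since $n\geq a+2\geq 3$ the chain length is at least $2$, so two pairwise orthogonal such systems must use disjoint basis vectors with each using at least $n$. As there are $Np$ chains and $Nnp$ basis vectors, each chain uses exactly $n$ basis vectors, and after a diagonal sign change one may take $v_{k,i}^{(c)} = f^{(c,i)}_{k-1}-f^{(c,i)}_k$ on the chain's dedicated basis $\{f^{(c,i)}_1,\ldots,f^{(c,i)}_n\}$.

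Next I would reduce to a concrete integer problem on the rows $v_{1,i}^{(c)}$. The required orthogonalities with the chain roots force the coordinates of $v_{1,i}^{(c)}$ to be a single constant $\alpha^{(c)}_{i,(c',j)}\in\Z$ along each chain $(c',j)\neq(c,i)$ and of the form $(\beta_i-1,\beta_i,\ldots,\beta_i)$ along its own chain, for some $\beta_i\in\Z$. Encoding each $v_{1,i}^{(c)}$ as a vector $\tilde\alpha_i^{(c)}\in\Z^{Np}$ with entry $n\beta_i-1$ at position $(c,i)$ and $n\alpha^{(c)}_{i,(c',j)}$ at $(c',j)\neq(c,i)$, the bilinear data of $\oplus_N Q_p(m,n)$ translates into
\[
\|\tilde\alpha_i^{(c)}\|^2=1-2mn,\quad \tilde\alpha_i^{(c)}\cdot\tilde\alpha_{i\pm 1}^{(c)}=nm\ \text{cyclically in}\ c,\quad \tilde\alpha_i^{(c)}\cdot\tilde\alpha_j^{(c')}=0\ \text{otherwise},
\]
together with $\tilde\alpha_i^{(c)}\equiv -e_{(c,i)}\pmod n$. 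Setting $a=-m\geq 1$ and $M_i=\sum_{(c',j)\neq(c,i)}(\alpha^{(c)}_{i,(c',j)})^2$, the norm condition becomes $(n\beta_i-1)^2+n^2 M_i = 1+2an$.

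I would then enumerate integer solutions for $n\geq a+2$: a case-by-case inspection over $\beta_i\in\Z$ shows that only three families survive, namely (a) $n=2a$ with $(\beta_i,M_i)=(0,1)$, (b) $n=2a+2$ with $(\beta_i,M_i)=(1,0)$, and (c) $n=2a-2$ (requiring $a\geq 4$) with $(\beta_i,M_i)=(-1,0)$; for any other $n\geq a+2$ no integer solution exists, giving the obstruction at once. In cases (b) and (c), $M_i=0$ forces $v_{1,i}^{(c)}$ to be supported entirely in its own chain, so $v_{1,i}^{(c)}\cdot v_{1,j}^{(c)}=0$ for $i\neq j$, contradicting $v_{1,i}^{(c)}\cdot v_{1,i\pm 1}^{(c)} = -m = a\neq 0$.

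The main obstacle is case (a), where $\tilde\alpha_i^{(c)}=-e_{(c,i)}+n\sigma_i e_{X_i}$ with $\sigma_i=\pm 1$ and $X_i\neq(c,i)$. Expanding $\tilde\alpha_i^{(c)}\cdot\tilde\alpha_{i+1}^{(c)}=nm=-an$ coordinatewise and exhausting the overlap patterns between the two-point supports of $\tilde\alpha_i^{(c)}$ and $\tilde\alpha_{i+1}^{(c)}$ (mutual own-positions, a single shared own-position, a common third position, or disjoint), the only surviving scenario requires $a=2$ together with $X_i=(c,i+1)$, $X_{i+1}=(c,i)$, and $\sigma_i=\sigma_{i+1}=+1$. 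But the same analysis applied to the other cyclic neighbor forces $X_i=(c,i-1)$, contradicting $X_i=(c,i+1)$ since $p>2$ makes $i-1\not\equiv i+1\pmod p$. This completes the proof.
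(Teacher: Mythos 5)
Your proof is correct and follows essentially the same route as the paper: establish rigidity of the $A_{n-1}$ chains (the paper's Claims 1--2, which your root-system argument recovers, as the paper's own remark acknowledges), reduce the coefficients of the $v$-vectors to a small integer problem (the paper's Claim 3 and equations (1)--(2), equivalent to your $\tilde\alpha$-encoding), enumerate the three arithmetic possibilities $n\in\{2a-2,2a,2a+2\}$, and use $p>2$ to rule out $n=2a$. The minor differences — citing the known $A_k$-chain rigidity rather than re-deriving it, disposing of the $M_i=0$ cases via disjoint supports, and handling $n=2a$ by overlap enumeration plus the cyclic contradiction instead of choosing a single convenient neighbor $j$ — are cosmetic variants of the paper's argument rather than a different method.
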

\begin{proof}
  For each $s=1, \cdots, N$, let $v_1^s, \cdots, v_p^s, w_1^s, \cdots, w_p^s, x_1^s, \cdots, x_{(n-2)p}^s$ be a basis of the $s$-th summand of $(\Z^{Nnp}, \oplus_N Q_p(m, n))$ in the order from left to right. We denote $\oplus_N Q_p(m, n)(a, b)$ by $a\cdot b$. Then, 

  \begin{align*}
    v_i^s \cdot v_j^t &= \begin{cases}
      2m-1 & \text{ if }(s, i) = (t, j),\\
      -m & \text{ if } s = t \text{ and }i - j \equiv \pm 1 \text{ in }\Z_p,\\
    0 & \text{ otherwise.}\end{cases} \qquad 
    &w_i^s \cdot w_j^t &= \begin{cases}
      -2 & \text{ if }(s, i) = (t, j),\\
      0 & \text{ otherwise.}\end{cases}\\ 
    v_i^s \cdot w_j^t &= \begin{cases}
      1 & \text{ if }(s, i) = (t, j),\\
      0 & \text{ otherwise.}\end{cases} \qquad 
    &w_i^s \cdot x_j^t &= \begin{cases}
      1 & \text{ if }(s, i) = (t, j),\\
      0 & \text{ otherwise.}\end{cases}\\ 
      v_i^s \cdot x_j^t &= 0. \qquad
    &x_i^s \cdot x_j^t &= \begin{cases}
      -2 & \text{ if }(s, i) = (t, j),\\
      1 & \text{ if } s = t \text{ and }i - j = \pm p,\\
      0 & \text{ otherwise.}\end{cases}
  \end{align*}
  Suppose $\oplus_N Q_p(m, n)$ embeds in $\langle -1 \rangle ^{Nnp}$. By abuse of notation, we use $v_i^s, w_i^s, x_i^s$ to denote their images in $(\Z^{Nnp}, -I_{Nnp})$. Then each of $v_i^s$, $w_i^s$, and $x_i^s$ must be a linear combination of the standard basis $\{e_1^t, \cdots, e_{np}^t\}_{t=1}^N$ of $(\Z^{Nnp}, -I_{Nnp})$. For simplicity, we also write $-I_{Nnp}(a, b)$ as $a\cdot b$. We say $v$ \textit{meets} $z$ if $v\cdot z\neq 0$ and $v$ \textit{is supported by} standard basis elements $z_1,\cdots, z_k$ when $v\cdot z = 0$ for a standard basis $z$ if and only if $z\not\in \{\pm z_1, \cdots, \pm z_k\}$. Let $V_i^s, W_i^s$ and $X_i^s$ be the set consisting of $e_k^t$ such that $v_i^s\cdot e_k^t\neq 0$, respectively for $w_i^s, x_i^s$.

  Before we proceed with the proof, we provide a rough guideline. First, we claim that $w_i^s$ and $x_i^s$ should be in the form given in the above example. Second, we find an expression for $v_i^s$ using $v_i^s\cdot w_i^t$ and $v_i^s \cdot x_i^t$. Using $v_i^s\cdot v_i^s = 2m - 1$ with the assumption $n\ge -m + 2$, we are left with only three cases: $n = -2m \pm 2$ or $n = -2m$. For each case, we obstruct lattice embeddings by using $v_i^s\cdot v_j^s = -m$ for $j= i \pm 1$.\\

  \noindent\textbf{Claim 1.} $|W_i^s\cap W_j^t|= 0$ when $(s,i)\neq (t,j)$.\\

  Since $w_i^s\cdot w_i^s = -2$, we may assume that $w_i^s = a_i^s - b_i^s$ for some $a_i^s, b_i^s\in\{\pm e_1^t,\cdots, \pm e_{np}^t\}_{t=1}^N$ such that $a_i^s\cdot b_i^s = 0$. If $|W_i^s\cap W_j^t|\neq 0$, then $W_i^s = W_j^t$ since $w_i^s\cdot w_j^t = 0$. Then we may assume that $w_j^t = \pm (a_i^s + b_i^s)$. Then $x_i^s\cdot w_j^t = x_i^s \cdot \pm (w_i^s + 2b_i^s) = \pm (1 +2x_i^s\cdot b_i^s) \neq 0$, which contradicts to $x_i^s\cdot w_j^t = 0$. Thus, \textbf{Claim 1} is done.\\

  \noindent Without any loss of generality, we still let $$w_i^s = a_i^s - b_i^s,$$ where $a_i^s, b_i^s\in \{\pm e_1^t, \cdots, \pm e_{np}^t\}_{t=1}^N$ and each one is orthogonal to the others.\\

  \noindent\textbf{Claim 2.} $|X_i^s\cap W_j^t|= 0$ when $(s,i)\neq (t,j)$.\\

  Otherwise, $X_i^s = W_j^t$ since $x_i^s\cdot w_j^t = 0$. It is clear from $x_i^s \cdot w_i^s = 1$ that $|X_i^s \cap W_i^s| = 1$, we have $|W_i^s \cap W_j^t| = 1$ for $(s,i)\neq (t,j)$, which contradicts to \textbf{Claim 1.}\\


  \noindent Since $|X_i^s\cap W_i^s| = 1$ and $|X_i^s\cap W_j^t| = 0$ for $i= 1,\cdots, p$ and any $(t, j) \neq (s, i)$, $x_i^s$ meets precisely one of $a_i^s$ and $b_i^s$. If $x_i^s$ meets $a_i^s$, then by setting new $(a_i^s, b_i^s)$ as $(-b_i^s, -a_i^s)$, we may assume that $$x_i^s = b_i^s - c_i^s$$ for some $c_i^s \in \{\pm e_1^t,\cdots, \pm e_{np}^t\}_{t=1}^N$ such that each of $a_i^s$ and $b_i^s$ is orthogonal to any $c_j^t$. Also note that $c_i^s \cdot c_j^t = 0$ for $(s,i)\neq (t,j)$ since $x_i^s\cdot x_j^t = 0$. Since $x_{p+i}^s\cdot x_i^s = 1$, $x_{p + i}^s$ meets exactly one of $b_i^s$ and $c_i^s$. If $x_{p+i}^s$ met $b_i^s$, then $x_{p+i}^s = -b_i^s-a_i^s$ since $x_{p+i}^s\cdot w_i^s = 0$. However, this contradicts to $x_{p+i}^s\cdot v_i^s = 0$ since $x_{p+i}^s\cdot v_i^s = (w_i^s-2a_i^s)\cdot v_i^s = 1 - 2a_i^s\cdot v_i^s\neq 0$. By a similar argument, we may assume that for $k = 1,\cdots, n-3$, $$x_{pk + i}^s = c_{p(k-1)+i}^s - c_{pk + i}^s,$$ where $\{a_i^s, b_i^s, c_{pk+i}^s\}_{s=1}^N$ spans $(\Z^{Nnp}, -I_{Nnp})$ and each of them is orthogonal to the others.\\

  \noindent\textbf{Claim 3.} $v_i^s \cdot b_j^t = v_i^s \cdot c_j^t = v_i^s \cdot c_{p+j}^t = \cdots = v_i^s \cdot c_{(n-3)p + j}^t = -l_{i,j}^t$ and $v_i^s \cdot a_j^t = -l_{i,j}^t + \delta_{(s,i)}^{(t,j)}$ where $|l_{i,j}^t| \le 1$.\\

  Let $v_i^s \cdot b_j^t = -l_{i,j}^t$. Then it follows from $v_i^s \cdot x_j^t = v_i^s\cdot (b_j^t-c_j^t) = 0$ that $v_i^s \cdot c_j^t = -l_{i,j}^t$. Similarly, since $v_i^s \cdot x_{pk + j}^t = 0$, we have $v_i^s \cdot c_{pk + j}^t = l_{i,j}^t$. The first part is done by induction on $k$. Since $v_i^s\cdot w_j^t = \delta_{(s,i)}^{(t,j)} = v_i^s\cdot (a_j^t-b_j^t) = v_i^s \cdot a_j^t + l_{i,j}^t$, the second part is done. Suppose some $|l_{i,j}^t| > 1$. Then $v_i^s\cdot v_i^s \le -(l_{i,j}^t)^2 (n-1) \le -4(n-1) \le -4(-m +1) < 2m - 1$, which completes the proof of \textbf{Claim 3}.\\

  \noindent We regard the index $i$ as an element in $\{1, \cdots, p\}\in \Z_p$. From \textbf{Claim 3}, we can let $v_i^s$ be of the following form: $$v_i^s = -a_i^s + \sum\limits_{t=1}^N\sum\limits_{k = 1}^p l_{i,k}^t (a_k^t + b_k^t + c_k^t + c_{p+k}^t + \cdots + c_{(n-3)p + k}^t),$$
  where each $l_{i,k}^t$ has absolute value at most $1$. Then we have the following:

  $$v_i^s \cdot v_j^s = a_i^s\cdot a_j^s + l_{j,i}^s + l_{i,j}^s -n\sum\limits_{t=1}^N \sum\limits_{k=1}^p l_{i,k}^t l_{j,k}^t.$$

  \noindent When $j = i$,
  \begin{equation}\label{eqn-1}
    2 l_{i,i}^s -n\sum\limits_{t=1}^N\sum\limits_{k=1}^p (l_{i,k}^t)^2 = 2m.
\end{equation}

  \noindent When $j = i \pm 1$,
  \begin{equation}\label{eqn-2}
    -l_{j,i}^s - l_{i,j}^s + n\sum\limits_{t=1}^N\sum\limits_{k=1}^p l_{i,k}^t l_{j,k}^t = m.
  \end{equation}

  Applying the assumption that $n \ge -m + 2$ to (\ref{eqn-1}), we have the inequality:
  $$\sum\limits_{s=1}^N\sum\limits_{k=1}^p(l_{i,k}^t)^2 \le \frac{2m - 2l_{i,i}^s}{m-2} = 2 + \frac{4 - 2l_{i,i}^s}{m-2} < 2.$$
  Note that the summation $S$ in the left-hand side of the inequality is a nonnegative integer less than $2$. If $S$ were $0$, then $l_{i,k}^t=0$ for all $k = 1, \cdots, p$ and $t=1,\cdots N$ so that $m = 0$ from (\ref{eqn-1}). Thus, $S$ must be $1$ and
  $$ -n + 2l_{i,i}^s = 2m.$$
  Before we start to deal with each case, notice that the assumption $n\ge -m+2$ was crucial in deriving $S = 1$. If we assumed $n \ge -m + c$ for some $c =0$ or $1$, then the strict inequality between $S$ and $2$ does not hold anymore and it might happen that $S = 2$ like the cases in Example \ref{ex-1}. \\

  \noindent \textbf{Case 1.} $l_{i,i}^s = -1$ and $n = -2m - 2$.\\

  In this case, $l_{i,k}^t = 0$ whenever $(t, k)\neq (s,i)$. Also note that $n = -2m -2$ and $m \le -4$ by the assumption $n \ge -m + 2$. Since $l_{i,j}^s = 0$ when $j = i\pm 1$, we obtain the following from (\ref{eqn-2}):
  $$m = -l_{j,i}^s - nl_{j,i}^s.$$

  \noindent In any case where $l_{j,i}^s\in \{-1, 0,1\}$, the condition $m \le -4$ is violated.\\

  \noindent \textbf{Case 2.} $l_{i,i}^s = 1$ and $n = -2m + 2$.\\

  In this case, we similarly obtain the following from (\ref{eqn-2}):
  $$m = -l_{j,i}^s + nl_{j,i}^s.$$

  \noindent In any case where $l_{j,i}^s\in \{-1, 0, 1\}$, the condition $m \le 0$ is violated.\\

  \noindent \textbf{Case 3.} $l_{i,i}^s = 0$ and $n = -2m$.\\

  In this case, there exists a unique $(t,k)\neq (s,i)$ such that $l_{i,k}^t = \pm 1$. Also note that $n = -2m$ and $m \le -2$ by the assumption $n \ge -m + 2$. Then when $j = i \pm 1$, it follows from (\ref{eqn-2}) and $n = -2m$ that:
  $$m = \frac{-l_{j,i}^s - l_{i,j}^s}{2l_{i,k}^t l_{j,k}^t + 1}.$$
  Since $i+1 \not\equiv i -1$ mod $p$ for $p > 2$, we can always choose $j\in \{i+1, i-1\}$ such that $k \neq j$. Then $l_{i,j}^s = 0$ so that:
  $$m = \frac{-l_{j,i}^s}{2l_{i,k}^t l_{j,k}^t + 1}.$$ Since $l_{i,k}^t l_{j,k}^t$ and $l_{j,i}^s$ are always either $-1, 0$, or $1$, it contradicts to $m\le -2$ for any cases. The assumption has also been necessary here because $K_{-m, 2m}$ is slice when $m=1$.\\

  Therefore, we can conclude that $(\Z^{Nnp}, \oplus_N Q_p(m, n))$ does not embed in $(\Z^{Nnp}, -I_{Nnp})$ whenever $n \ge -m + 2$.
  \end{proof}

  \begin{remark}
    One might guess that the $0$-negativity of $K_{m,n}$ would be enough since we have used only negative definite filling but not a positive one. However, we have made use of both $0$-negativity and $0$-positivity at two points. First, when we proved our branched cover of $n\overline{\mathbb{CP}}^{\smash{2}}-\Delta$ is negative definite in Lemma \ref{lem-Wmn}, recall that we used the fact that $\sigma_{K_{m,n}}(\omega) = 0$. In general, $K$ being $0$-negative is not sufficient to guarantee that $\sigma_K(\omega)$ vanishes.

    Second, our lattice embedding obstruction on the negative definite filling does not always work unless $n \ge -m + 2$. For example, consider the knot $K_{m, 1}$ with $m < 0$. One can find that the intersection form $Q_p(m, 1)$ of $W_p(m, 1)$ embeds in the standard negative definite $(\Z^{p}, -I_{p})$ when $-m = k^2$. Instead, we take its mirror $K_{-1, -m}$. Since this knot is $0$-bipolar, the $p$-fold branched cover of $K_{-1, -m}$ still has a negative definite filling and we obstruct the lattice embedding of $Q_p(-1, -m)$. 
  \end{remark}

  \begin{remark}
    Marco Golla has informed us that \textbf{Claims 1} and \textbf{2} follow from a well-known fact about the uniqueness of the lattice embeddings of so-called \textit{$2$-legs} or \textit{chain of twos}. For example, see \cite{AGLL20, AMP22}. Nevertheless, we include the proof for the sake of completeness of this article.
  \end{remark}

\begin{proof}[Proof of Theorem \ref{thm-A}]
  It is known that $K_{m, -m\pm 1}$ is slice by \cite{Sie75} and $K_{m, -m}$ is rationally slice \cite{Cha07}. Thus, (b) implies (a), and it is trivial that (a) implies (c). We prove that (c) implies (b). We proceed by contrapositive, so we show that the double twist knot $K_{m,n}$ have infinite order in the rational knot concordance group $\C_\Q$ whenever $mn\neq 0, n \neq -m \pm 1$, and $n \neq -m$.

  Note that the signature $\sigma(K_{m,n}) \neq 0$ when $mn > 0$ and $\sigma$ is an integer-valued homomorphism from $\C_\Q$ \cite{CO93, CK02}, $K_{m,n}$ has infinite order in $\C_\Q$. We assume $mn < 0$. By the symmetry $K_{m,n} = K_{n,m}$ and taking mirror if it is needed, we may assume that $n \ge -m$. By the hypothesis that $n \neq -m, -m\pm 1$, it is enough to prove that $K_{m,n}$ for $n \ge -m + 2$ has infinite order in $\C_\Q$ when $n\ge -m + 2$.

  Suppose $\#_NK_{m, n}$ with $n \ge -m + 2$ is rationally slice. Then the $p$-fold cyclic branched cover $\#_N\Sigma_p(K_{m,n})$ bounds a smooth rational ball $V$ for sufficiently large prime $p$ by Theorem \ref{thm-B}. Take the negative definite filling $\natural_N W_p(m,n)$ of $\#_N \Sigma_p(K_{m, n})$ as given in Lemma \ref{lem-Wmn}. Then Theorem \ref{thm-Donaldson} implies that the intersection form $\oplus_N Q_p(m,n)$ of $\natural_N W_p(m,n)$ embeds in $\langle -1 \rangle ^{Nnp}$. However, by Lemma \ref{lem-main}, it is impossible for any prime $p > 2$.
\end{proof}

\section{Rational concordance of twist knots in the topological category}\label{sec-4}
Recall that Donaldson's theorem used in the previous section works in the smooth category. However, the topological concordance is much different from the smooth concordance. In this section, we prove Theorem \ref{thm-C}, which recovers a partial result for the twist knots $K_n = K_{-1, n}$ of Theorem \ref{thm-A} in the topological category. We first pick out the algebraically rationally slice twist knots. Then we obstruct their topological rational sliceness by employing von Neumann $\rho$-invariant. Througout this section, all discussion on the concordance is carried out in the topological category. For example, a rationally slice knot means a knot bounding a locally flat disk in a topological rational ball.

\subsection{Rational algebraic concordance}
Levine \cite{Lev69} defined a quotient map $\phi$ of the knot concordance group $\C$ by associating the S-equivalence class of a Seifert form. The quotient image $\AC$ is called the \textit{algebraic concordance group}. He also proved that $\AC\cong \Z^\infty\oplus \Z_2^\infty\oplus \Z_4^\infty$. Note that the algebraic concordance class is a topological knot concordance invariant. One simple way to obstruct a knot being slice is to check if its algebraic concordance class is nontrivial. For example, we can use the Alexander polynomial.

\begin{theorem}[Fox-Milnor condition]\label{thm-FM}
  If a knot is algebraically slice, then its Alexander polynomial $\Delta(t)$ satisfies:
  \begin{equation*}
    \Delta(t) = f(t)f(t^{-1}).
  \end{equation*}
\end{theorem}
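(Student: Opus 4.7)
The plan is to unpack the definition of algebraic sliceness at the level of the Seifert matrix, then compute the Alexander polynomial directly from the block form. Recall that $\Delta(t) \doteq \det(tV - V^T)$ for any Seifert matrix $V$, where $\doteq$ denotes equality up to multiplication by a unit $\pm t^k$ in $\Zt$. If $K$ is algebraically slice, then after a change of basis $V$ is $S$-equivalent (and in fact $S$-equivalent through a congruence) to a $2g \times 2g$ matrix of the block form
\[
V = \begin{pmatrix} 0 & A \\ B & C \end{pmatrix},
\]
where each block is $g \times g$; the vanishing of the upper-left block encodes a metabolizer (Lagrangian) for the Seifert form.

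Next, I would substitute this block form into $tV - V^T$ and compute
\[
tV - V^T = \begin{pmatrix} 0 & tA - B^T \\ tB - A^T & tC - C^T \end{pmatrix}.
\]
Because the upper-left block is zero, swapping the two blocks of $g$ columns (which introduces a sign $(-1)^{g^2}$) turns this into an upper block-triangular matrix, so
\[
\det(tV - V^T) = (-1)^{g^2}\,\det(tA - B^T)\,\det(tB - A^T).
\]
Setting $f(t) := \det(tA - B^T)$, I would then simplify $\det(tB - A^T)$ by transposing inside the determinant and factoring out a $t^g$:
\[
\det(tB - A^T) = \det(tB^T - A) = (-1)^g t^g \det(t^{-1}A - B^T) = (-1)^g t^g f(t^{-1}).
\]
Combining the two displays, $\Delta(t) \doteq t^g f(t) f(t^{-1})$, which is precisely the Fox--Milnor factorization once one absorbs $\pm t^g$ into the ambiguity of $\doteq$.

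The key step to be careful with is the first one: guaranteeing that algebraic sliceness really does produce a Seifert matrix with a genuine zero block of size $g$, as opposed to only a rational metabolizer or only a metabolizer after enlarging the surface. The cleanest route is to take the definition of algebraic sliceness as the existence of a half-rank subgroup of the Seifert form on which the form vanishes integrally, and then to perform a $\Z$-basis change of the Seifert surface's first homology so that this subgroup becomes the span of the first $g$ basis vectors; in that basis the upper-left $g \times g$ block of $V$ vanishes automatically. The remaining sign/unit bookkeeping in the determinant computation is routine and causes no real obstacle.
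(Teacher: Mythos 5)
The paper does not prove Theorem \ref{thm-FM}: it is quoted as the classical Fox--Milnor criterion (originating with Fox and Milnor for slice knots and extended by Levine to algebraically slice knots), so there is no internal proof to compare yours against. Your argument is the standard one and it is correct. The block computation, the column-swap sign $(-1)^{g^2}$, the transpose trick $\det(tB - A^T) = \det(tB^T - A)$, and the factor $(-1)^g t^g$ all check out, and $g^2 + g$ is even so the signs cancel as you intend.

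On the point you flag as delicate: it is handled correctly in spirit, but two small remarks tighten it up. First, if the metabolizer $H \subset \Z^{2g}$ is a half-rank subgroup but not a direct summand, you cannot literally make it the span of the first $g$ basis vectors; however, you can replace $H$ by its saturation $H' = \{v : kv \in H \text{ for some } k \neq 0\}$, which \emph{is} a direct summand, and the Seifert form still vanishes on $H' \times H'$ because $\Z$ is an integral domain (from $V(kx', k'y') = kk'\,V(x', y') = 0$ you get $V(x', y') = 0$). So the zero block is genuinely achievable. Second, the worry about ``only a metabolizer after enlarging the surface'' is moot: S-equivalence preserves the Alexander polynomial up to units, so even if the block form were only available for a stabilized Seifert matrix, the computation would still yield $\Delta(t) \doteq f(t)f(t^{-1})$.
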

\noindent Note that the above equality means that both sides are the same up to multiplication by a unit in $\Zt$. The converse, in general, does not hold. For example, consider a knot $K$ whose algebraic concordance order is $4$. Then the connected sum $K\#K$ satisfies the splitting in Theorem \ref{thm-FM}, but its order in $\AC$ is $2$. Nevertheless, there is a partial converse:
\begin{theorem}\cite{Cha07}\label{thm-FM-conv}
  Let $K$ be a knot, and let $\Delta(t)$ be its Alexander polynomial. If each irreducible factor of $\Delta(t)$ is not symmetric up to a unit in $\Zt$, then $K$ is algebraically slice. In particular, if
  \begin{equation*}
    \Delta(t) = f(t)f(t^{-1})
  \end{equation*}
  for some irreducible $f(t)$ such that $f(t) \neq f(t^{-1})$ up to a unit, then $K$ is algebraically slice.
\end{theorem}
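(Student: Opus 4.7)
The plan is to read off the algebraic concordance class of $K$ from Levine's classification of $\AC$ \cite{Lev69}, in which each summand is indexed by a \emph{symmetric} irreducible polynomial dividing the Alexander polynomial. Under the hypothesis no such polynomial appears, so every coordinate of $[K]$ is automatically zero.

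More explicitly, set $\Lambda = \Qt$ and write $\A = H_1(X_\infty;\Q)$ for the rational Alexander module, equipped with its Blanchfield pairing $Bl \colon \A \times \A \to \Q(t)/\Lambda$. The primary decomposition gives $\A = \bigoplus_p \A_p$, where $p$ runs over the irreducible factors of $\Delta_K(t)$ in $\Lambda$. Since $\Delta_K$ is symmetric, the involution $p \mapsto p^*(t) := t^{\deg p} p(t^{-1})$ permutes these factors, and the hypothesis asserts that this permutation is fixed-point-free. Standard linking form theory shows that $Bl(\A_p, \A_q) = 0$ whenever $q \not\sim p^*$: a suitable power of $p$ annihilates $\A_p$ while $p$ acts invertibly on $\A_q$, and the Hermitian identity $\overline{Bl(x,y)} = Bl(y,x)$ then forces the pairing to vanish. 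Hence the factors of $\Delta_K$ group into disjoint pairs $\{p, p^*\}$, and each such pair contributes a nondegenerate Blanchfield summand $\A_p \oplus \A_{p^*}$ in which both $\A_p$ and $\A_{p^*}$ are totally isotropic. The summand is therefore hyperbolic with $\A_p$ as a Lagrangian; collecting one Lagrangian from each pair yields a global metabolizer for $Bl$, and by the classical equivalence between Blanchfield metabolicity and algebraic sliceness, $K$ is algebraically slice.

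The ``in particular'' statement is immediate: the irreducible factors of $f(t) f(t^{-1})$ are exactly $f$ and $f^*$, and neither is symmetric by hypothesis. The main technical nuisance is careful bookkeeping around the involution $p \mapsto p^*$ (normalizing so that $p^* \in \Lambda$ and $(p^*)^* \sim p$) and the passage between rational Blanchfield metabolicity and integral Seifert metabolicity. The latter can be sidestepped by appealing to Levine's classification directly: only symmetric irreducible factors of $\Delta$ can contribute a nonzero component to $[K] \in \AC$, and by hypothesis no such factor exists.
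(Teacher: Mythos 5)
Your argument is correct, and it actually contains two proofs: the explicit Blanchfield-metabolizer construction in the main body, and the one-line ``sidestep'' in the last sentence. The paper takes the second route. Its entire proof is: a reciprocal number $z$ has a symmetric irreducible polynomial $\lambda(t)$, which by hypothesis cannot divide $\Delta(t)$, so by \cite[Proposition 3.6(1)]{Cha07} the $z$-primary part of $\phi(K)\in\AC$ vanishes for every such $z$, hence $\phi(K)=0$. This is precisely the ``only symmetric irreducible factors of $\Delta$ contribute to $[K]\in\AC$'' observation you relegate to a remark, outsourced to Cha's structure theorem for the $z$-primary decomposition of the algebraic concordance group.

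What your main argument buys that the paper's citation-based proof does not: it is self-contained at the level of the rational Blanchfield form, exhibiting a concrete metabolizer by grouping primary summands $\A_p\oplus\A_{p^*}$ and noting that under a fixed-point-free involution each $\A_p$ is totally isotropic of half rank. The one point you correctly flag as a ``technical nuisance'' --- passing from rational Blanchfield metabolicity to the integral Seifert-form metabolicity that Levine's $\AC$ is defined with --- is precisely what the appeal to Cha's proposition handles for free, since that result is stated at the level of $\AC$ itself rather than a $\Q$-coefficient version. Your argument is not wrong, but as written it proves triviality of the image in the rational Witt group and then needs the (true, but nontrivial) injectivity of $\AC\to\AC\otimes\Q$ on the relevant primary piece; the paper avoids ever raising that issue. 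Both are valid; the paper's is shorter at the cost of a black box.
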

\begin{proof}
  For any reciprocal number $z$, its irreducible polynomial $\lambda(t)$ is symmetric so that it does not divide $\Delta(t)$ by assumption. Then by \cite[Proposition 3.6 (1)]{Cha07}, the $z$-primary part of $\phi(K)$ is trivial. Since this holds for every reciprocal number $z$, $\phi(K)$ is trivial in $\AC$.
\end{proof}

As an analogue for the rational knot concordance group $\C_\Q$, Cha \cite{Cha07} defined a quotient map $\phi_\Q: \C_\Q\rightarrow \AC_\Q$ such that below diagram commutes:

\begin{center}
\begin{tikzcd}
&\C \ar[twoheadrightarrow]{r}{\psi}\ar[twoheadrightarrow]{d}{\phi} &\C_\Q \ar[twoheadrightarrow]{d}{\phi_\Q}\\
&\AC \ar[twoheadrightarrow]{r}{\overline{\psi}} &\AC_\Q
\end{tikzcd}
\end{center}
where the quotient image of $\C_\Q$ is called the \textit{algebraic rational concordance group} $\AC_\Q$. Cha also used an analogous version of Fox-Milnor condition implicitly in \cite{Cha07}. See also \cite[Proposition 4.5]{CFHH13} for an explic statement.

\begin{theorem}[generalized Fox-Milnor condition]\cite{Cha07}\label{thm-FM-Q}
  If a knot is algebraically rationally slice, then its Alexander polynomial $\Delta(t)$ satisfies for some positive integer $c$:
  \begin{equation*}
    \Delta(t^c) = f(t)f(t^{-1}).
  \end{equation*}
\end{theorem}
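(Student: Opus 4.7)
The plan is to mimic the classical Fox-Milnor proof via the Blanchfield form, after the base change $\Zt \to \Qs$ given by $t \mapsto s^c$. Recall the classical argument: if $K$ is algebraically slice, then the Alexander module $\A(K) = H_1(X_K;\Zt)$ carries a Lagrangian submodule $P = P^\perp$ for the Blanchfield form $\Bl\colon \A(K) \times \A(K) \to \Q(t)/\Zt$. Since $\Bl$ is non-degenerate, the short exact sequence $0 \to P \to \A(K) \to \A(K)/P \to 0$ identifies $\A(K)/P$ with the $\Zt$-dual of $P$ twisted by the involution $t\mapsto t^{-1}$, so the order ideals multiply and one obtains $\Delta(t) \doteq f(t)f(t^{-1})$, where $f(t)$ is the order of $P$ and $\doteq$ denotes equality up to a unit in $\Zt$.

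First I would unpack Cha's notion of algebraic rational sliceness with complexity $c$: it amounts to the existence of a Lagrangian submodule $P$ for the Blanchfield form on the base-changed Alexander module $\A_c(K) := \A(K) \otimes_{\Zt}\Qs$, where $\Qs$ is regarded as a $\Zt$-algebra via $t \mapsto s^c$. This is exactly the content of Cha's Witt group vanishing at complexity $c$, and I would verify the translation between the two formulations. Since $\A(K)$ is a finitely generated torsion $\Zt$-module of order $\Delta(t)$, base change makes $\A_c(K)$ a finitely generated torsion $\Qs$-module of order $\Delta(s^c)$ up to a unit in $\Qs$. Running the classical Blanchfield argument verbatim over $\Qs$, with pairing valued in $\Q(s)/\Qs$, the Lagrangian $P \subset \A_c(K)$ supplies a factorization $\Delta(s^c) = f(s)f(s^{-1})$ up to a unit in $\Qs$. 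Renaming the variable $s$ as $t$ then yields $\Delta(t^c) \doteq f(t)f(t^{-1})$, which is the desired statement.

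The main obstacle is bookkeeping the unit group. Units in $\Qs$ are of the form $a s^k$ with $a \in \Q^*$, which is strictly larger than the units $\pm t^k$ of $\Zt$. After absorbing the monomial $s^k$ into $f$ and evaluating at $s=1$, the identity $\Delta(1)=1$ forces $a = 1/f(1)^2$, so the rational scalar can be cleared by rescaling $f$ to $g := f/|f(1)|$ in $\Qt$; this legitimizes the stated factorization over $\Qt$ (and after clearing denominators, over $\Zt$ up to a rational square). The only other technical point to handle carefully is the translation from Cha's definition of algebraic rational sliceness to the existence of a Lagrangian for the base-changed Blanchfield form, but this is standard within his framework and directly parallels the classical translation between algebraic sliceness and the metabolizer property of the Seifert form.
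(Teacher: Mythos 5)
The paper does not prove this statement; it is cited to Cha and, for an explicit version, to \cite[Proposition 4.5]{CFHH13}, so there is no internal argument to compare with, and I am evaluating your proposal on its own.

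Your Blanchfield-form route is in the right spirit, but two steps are genuine gaps rather than bookkeeping. First, the identification of ``algebraically rationally slice'' with ``for some $c$, the base-changed module $\A_c(K)$ carries a Lagrangian for $\Bl_c$'' is not a definitional unpacking. Cha defines $\AC_\Q$ as a direct limit of Witt groups built from \emph{generalized Seifert forms} with complexity, and passing to the Blanchfield form $\Bl_c$ requires invoking his theorem identifying the two, together with a stable-to-unstable step (triviality in a Witt group a priori gives only a stably metabolic form). Your phrase ``directly parallels the classical translation'' elides exactly the result you would need to locate and cite; this is the substantive input, not a routine preliminary.

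Second, the unit cleanup. Running the classical Blanchfield order-ideal argument over $\Qs$ only yields $\Delta(s^c)=a\,s^k f(s)f(s^{-1})$ with $a\in\Q^*$ and $f\in\Qs$, and your concluding parenthetical ``over $\Zt$ up to a rational square'' is not the form the paper needs: a nontrivial rational square is not a unit in $\Zt$, and the proof of Theorem~\ref{thm-ACQ} below explicitly extracts integer coefficients from $f$, so it genuinely requires $f\in\Z[t]$ with the remaining unit equal to $\pm t^k$. The repair is Gauss's lemma: write $f=r\,g$ with $g\in\Zt$ primitive and $r\in\Q^*$; then $g(t)g(t^{-1})$ is primitive up to a power of $t$, and $\Delta(t^c)$ is primitive in $\Zt$ because $\Delta(1)=\pm1$, forcing $ar^2=\pm1$ and yielding an honest $\Zt$-factorization. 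With those two points supplied, the argument goes through.
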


\noindent The equality holds up to a unit in $\Zt$. There is also a partial converse as a rational analogue of Theorem \ref{thm-FM-conv}, proved in \cite{Cha07}.

\begin{theorem}\cite[Below of Example 3.17]{Cha07}\label{thm-FM-Q-conv}
  Let $K$ be a knot, and let $\Delta(t)$ be its Alexander polynomial. If each irreducible factor of $\Delta(t^c)$ for some positive integer $c$ is not symmetric up to unit in $\Zt$, then $K$ is algebraically rationally slice. In particular, if
  \begin{equation*}
    \Delta(t^c) = f(t)f(t^{-1})
  \end{equation*}
  for some irreducible $f(t)$ such that $f(t) \neq f(t^{-1})$ up to a unit, then $K$ is algebraically rationally slice.
  
\end{theorem}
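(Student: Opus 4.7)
The plan is to mirror the proof of Theorem \ref{thm-FM-conv} in the rational setting, replacing $\phi$ and $\Delta(t)$ by $\phi_\Q$ and $\Delta(t^c)$ respectively. The key input I would invoke is an analog, in Cha's rational algebraic concordance group $\AC_\Q$, of \cite[Proposition 3.6 (1)]{Cha07}. Specifically, $\AC_\Q$ admits a primary decomposition refined by a positive integer complexity parameter $c$ and indexed by pairs $(z,c)$ where $z$ is a reciprocal algebraic number. The vanishing criterion I would use is the following: for such a pair, the $(z,c)$-primary component of $\phi_\Q(K)$ is trivial whenever the minimal polynomial $\lambda(t)$ of $z$ does not divide $\Delta(t^c)$ in $\Zt$.

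Granting this, the argument runs as follows. The minimal polynomial of any reciprocal algebraic number is symmetric up to a unit in $\Zt$. By hypothesis, there is a positive integer $c$ such that every irreducible factor of $\Delta(t^c)$ is non-symmetric, hence no minimal polynomial of a reciprocal number divides $\Delta(t^c)$. Consequently the $(z,c)$-primary part of $\phi_\Q(K)$ vanishes for every reciprocal $z$, and since these primary components exhaust $\phi_\Q(K)$, I conclude $\phi_\Q(K)=0$ in $\AC_\Q$; that is, $K$ is algebraically rationally slice.

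For the ``in particular'' clause, suppose $\Delta(t^c) = f(t)f(t^{-1})$ with $f(t)$ irreducible and $f(t) \neq f(t^{-1})$ up to a unit. Then the two irreducible factors of $\Delta(t^c)$ are $f(t)$ and $f(t^{-1})$, and each is non-symmetric: symmetry of $f(t)$ would force $f(t)=f(t^{-1})$ up to a unit, contradicting the hypothesis. So the first part applies. The hard part, and the only place where this really departs from the integral version, is setting up the complexity-refined primary decomposition of $\AC_\Q$ together with the correct vanishing criterion; once that framework from \cite{Cha07} is in place, the conclusion follows from the elementary observation that a symmetric polynomial cannot divide a product of non-symmetric irreducible polynomials.
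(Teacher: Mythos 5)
Your approach diverges from the paper's, and there is a genuine gap in it. You propose to build, within Cha's rational algebraic concordance group $\AC_\Q$, a primary decomposition indexed by pairs $(z,c)$ (reciprocal algebraic number, complexity), together with a vanishing criterion: the $(z,c)$-primary part of $\phi_\Q(K)$ dies when $\lambda_z(t)\nmid\Delta(t^c)$. You explicitly acknowledge you have not established this framework, and indeed it is not available in that form. Cha's $\AC_\Q$ is assembled from Witt groups across all complexities with identifications between them, not as a direct sum indexed by $(z,c)$; so the claim that ``these primary components exhaust $\phi_\Q(K)$'' and hence that killing them \emph{for the single $c$ in the hypothesis} forces $\phi_\Q(K)=0$ is exactly the nontrivial structural fact you would need to prove, and it is left dangling.

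The paper avoids this entirely by a reduction to the integral case. It observes that $\A=\phi_\Q(K)\in\AC_\Q$ is, by Cha's construction, the image under the homomorphism $\phi_c:\AC\to\AC_\Q$ of the \emph{generalized Seifert form $A$ of complexity $c$}, an honest integral algebraic concordance class whose Alexander polynomial is $\Delta(t^c)$. Then Theorem \ref{thm-FM-conv} (the already-proved integral Fox--Milnor converse, which does rest on \cite[Proposition 3.6(1)]{Cha07}) applies directly to $A$: since every irreducible factor of $\Delta(t^c)$ is non-symmetric, $A=0$ in $\AC$, hence $\A=\phi_c(A)=0$. This is cleaner because the only primary decomposition invoked is the classical one for $\AC$, and the complexity bookkeeping is absorbed into the single map $\phi_c$ rather than into a refined decomposition of $\AC_\Q$. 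To repair your argument you would either need to prove the $(z,c)$-primary structure of $\AC_\Q$ with the vanishing criterion you state, or---more economically---replace your step with the paper's: pass to the complexity-$c$ form and invoke the integral result.
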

\begin{proof}
  Let $\A\in\AC_\Q$ be the algebraic rational concordance class of $K$. By assumption, the generalized Seifert form $A$ of complexity $c$ is trivial by Theorem \ref{thm-FM-conv}. Since $\A$ is defined as the image of $A$ under the homomorphism $\phi_c$ defined in \cite[Definition 2.22]{Cha07}, the proof is done.
\end{proof}

A polynomial $p(t)$ is called \textit{strongly irreducible} if $p(t^c)$ is irreducible for all positive integer $c$. It was previously shown that the Alexander polynomial of the twist knot $K_n$ for not perfect power $n$ is strongly irreducible \cite{BD12}. It is, however, difficult to prove that a given polynomial is strongly irreducible in general. Instead of that, we can use Theorems \ref{thm-FM-Q} and \ref{thm-FM-Q-conv} directly to prove Theorem \ref{thm-ACQ}:
\begin{theorem}\label{thm-ACQ}
  $K_n$ is algebraically rationally slice if and only if $n=k(k-1)$ or $n=k^2$ for some $k$.
\end{theorem}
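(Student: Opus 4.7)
The plan is to use the generalized Fox-Milnor condition (Theorem~\ref{thm-FM-Q}) for the necessity direction and its partial converse (Theorem~\ref{thm-FM-Q-conv}) for sufficiency. Recall $\Delta_{K_n}(t) = nt - (2n+1) + nt^{-1}$.

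For the sufficiency direction, I will split into two cases. When $n = k(k-1)$, the knot $K_n$ is algebraically slice by Levine's classical result and hence algebraically rationally slice. When $n = k^2$ with $k \neq 0$, I will exhibit the complexity-$2$ factorization
\begin{equation*}
\Delta(t^2) = -(kt^2 + t - k)(kt^{-2} + t^{-1} - k)
\end{equation*}
by direct multiplication, so that $\Delta(t^2)$ agrees with $f(t)f(t^{-1})$ up to the unit $-1$ in $\Zt$ for $f(t) = kt^2 + t - k$. Its discriminant $1 + 4k^2$ is never a perfect square for $k \neq 0$, so $f$ is irreducible over $\Z$; and up to any unit $\pm t^j$, $f(t^{-1})$ represents $-kt^2 + t + k$, whose $t^2$ and constant coefficients have signs opposite to those of $f(t)$, so $f(t) \neq f(t^{-1})$ up to a unit. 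Theorem~\ref{thm-FM-Q-conv} then yields that $K_{k^2}$ is algebraically rationally slice.

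For the necessity direction, suppose $K_n$ is algebraically rationally slice. By Theorem~\ref{thm-FM-Q}, $\Delta(t^c)$ agrees with $f(t)f(t^{-1})$ up to units in $\Zt$ for some $c \ge 1$ and $f \in \Zt$. After absorbing a Laurent shift into $f$, this reduces to the polynomial identity $g(t^c) = -P(t)P^*(t)$ in $\Z[t]$, where $g(x) = nx^2 - (2n+1)x + n$, $P(t) = \sum_{i=0}^c a_i t^i \in \Z[t]$ has degree $c$, and $P^*(t) = t^c P(1/t)$; the sign is pinned by evaluating at $t=1$, since $g(1) = -1$ while $P(1)^2 \ge 0$. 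Equating coefficients gives $a_0 a_c = -n$, $\sum_{i=0}^c a_i^2 = 2n+1$, and the vanishing of all intermediate coefficients of $P(t)P^*(t)$ at $t^k$ for $k \notin \{0,c,2c\}$.

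The key observation, which drives the whole dichotomy, is the identity
\begin{equation*}
(a_0 + a_c)^2 + \sum_{i=1}^{c-1} a_i^2 = \sum_{i=0}^c a_i^2 + 2 a_0 a_c = (2n+1) - 2n = 1.
\end{equation*}
Since this writes $1$ as a sum of non-negative integers, exactly one summand equals $1$ and the others vanish. In Case~A, $(a_0+a_c)^2 = 1$ with $a_1 = \cdots = a_{c-1} = 0$; then $(a_0 - a_c)^2 = (a_0+a_c)^2 - 4 a_0 a_c = 4n+1$ is a perfect square, forcing $n = k(k-1)$. In Case~B, $a_0 + a_c = 0$ and exactly one $a_j = \pm 1$ for some $j \in \{1,\ldots,c-1\}$; then $-a_c^2 = -n$ gives $n = k^2$ with $k = a_c$. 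The principal remaining technical step, which I anticipate as the main obstacle, is the intermediate-coefficient check in Case~B: expanding $P(t)P^*(t)$ for $P(t) = kt^c + \epsilon t^j - k$ leaves an error term $k\epsilon(t^{2c-j} - t^{c+j} + t^j - t^{c-j})$, whose identical vanishing forces the exponents to cancel pairwise, which happens precisely when $2c - j = c + j$, i.e., $c = 2j$. Hence $c$ must be even, $n = k^2$, and the necessity direction is complete.
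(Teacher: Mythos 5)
Your argument matches the paper's essentially line by line: both directions establish sufficiency via the explicit factorizations $\Delta_n(t) = (kt-(k-1))((k-1)t-k)$ and $\Delta_n(t^2) = (kt^2-t-k)(kt^2+t-k)$ fed into Theorems~\ref{thm-FM-conv} and~\ref{thm-FM-Q-conv}, and both derive necessity from the coefficient identity $(a_0+a_c)^2 + \sum_{i=1}^{c-1}a_i^2 = 1$ obtained by comparing the $t^{2c}$ and $t^c$ coefficients. The one divergence is that your ``principal remaining technical step'' in Case~B (the error-term analysis forcing $c = 2j$) is superfluous for the statement at hand: once $a_0 + a_c = 0$ yields $n = a_c^2$, the necessity direction is finished, and the intermediate-coefficient check only pins down which complexities $c$ can realize the factorization, which the theorem does not ask for.
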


\begin{proof}
Let $\Delta_n(t)$ be the Alexander polynomial of $K_n$. Then 
\begin{equation*}
  \Delta_n(t) = nt^2 - (2n + 1)t + n.
\end{equation*} If $n = k(k-1)$, then it was already known that $K_n$ is algebraically slice \cite{Lev69}. By using the fact that  
\begin{equation*}
  \Delta_n(t) = (kt - (k-1))((k-1)t - k)
\end{equation*} and $kt - (k-1)$ is not symmetric, it also follows from Theorem \ref{thm-FM-conv} that $K_n$ is algebraically slice.

\noindent For the case when $n = k^2$, it was previously shown that $K_n$ is algebraically rationally slice in \cite[Remark 4.6]{BD12} by computing the complete invariants $s, e$, and $d$ for $\AC_\Q$ defined by Cha \cite{Cha07}. For the reader's convenience, we reprove it without computing $s, e$, and $d$. Since 
  \begin{equation*}
    \Delta_n(t^2) = (kt^2-t - k)(kt^2+t-k)
  \end{equation*}
  and $kt^2\pm t-k$ is obviously irreducible and not symmetric up to unit, by Theorem \ref{thm-FM-Q-conv}, $K_n$ is algebraically rationally slice. See also \cite[Example 3.17]{Cha07}.

  Now we prove the converse. Suppose $K_n$ is algebraically rationally slice. Then, by Theorem \ref{thm-FM-Q}, $\Delta_n(t^c) = uf(t)f(t^{-1})$ for some unit $u\in \Zt$, where $f$ is a degree $k$ polynomial. Without loss of generality, we may assume that $u = \pm t^c$ and $f(t)\in \Z[t]$ with degree $c$. Write $f(t) = a_c t^c + a_{c-1} t^{c-1} + \cdots + a_1 t + a_0$ and let $g(t) = t^c f(t^{-1})$ and $\varepsilon = u / t^c$. Then,
  \begin{align*}
    \Delta_n(t^c) &= nt^{2c} - (2n + 1)t^c + n = \varepsilon f(t)g(t)\\
    &= \varepsilon(a_c t^c + a_{c-1} t^{c-1} + \cdots + a_1 t + a_0)(a_0 t^c + a_1 t^{c-1} + \cdots + a_{c-1} t + a_c)\\
    &= \varepsilon(a_c a_0 t^{2c} + \cdots + (a_c^2 +\cdots + a_0^2)t^c + \cdots + a_0 a_c).
  \end{align*}
  Then $n = \varepsilon a_c a_0$, so the coefficient of $t^c$ term is $\varepsilon\sum\limits_{i=0}^c a_i^2 = -2\varepsilon a_c a_0 - 1$. Then $(a_c + a_0)^2 + \sum\limits_{i=1}^{c-1} a_i^2 = -\varepsilon$. Since the left-hand side is nonnegative, the right-hand side is $+1$. Thus, $|a_c + a_0|\le 1$, which implies that $n=k(k-1)$ or $n=k^2$.
\end{proof}

Thus, to prove Theorem \ref{thm-C}, it suffices to obstruct the algebraically rationally slice twist knots from being actually rationally slice. This will be dealt with in the next subsection.

\subsection{von Neumann $\rho$-invariant with complexity}
In this subsection, we employ the von Neumann $\rho$-invariant with complexity to prove Theorem \ref{thm-C}. As one can see in Theorem \ref{thm-ACQ}, the remaining case splits into two parts:
\begin{itemize}
  \item $K_n$ is trivial in $\AC$,
  \item $K_n$ is nontrivial in $\AC$ but is trivial in $\AC_\Q$.
\end{itemize}

\noindent Since the product of any two nonzero consecutive integers cannot be a square, the first case corresponds exactly to $n = k(k-1)$, and the second case corresponds exactly to $n = k^2$. Thus, it is enough to prove the following two theorems:

\begin{theorem}\label{thm-AC0}
  If $n = k(k-1)$ with $k > 2$, then $K_n$ is not rationally slice.
\end{theorem}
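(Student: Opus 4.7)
The plan is to apply Theorem~\ref{thm-D}: if $K_n$ were rationally slice in a rational ball of complexity $c$, then $\A_c(K_n)$ would admit a Lagrangian submodule $P$ with $\rho_c^{(1)}(K_n,P)=0$. I would prove Theorem~\ref{thm-AC0} by showing that for $n=k(k-1)$ with $k>2$, every Lagrangian submodule $P$ of $\A_c(K_n)$, at every complexity $c\ge 1$, satisfies $\rho_c^{(1)}(K_n,P)\neq 0$. Note that this is exactly the setting in which lower-order obstructions fail, since by Theorem~\ref{thm-ACQ} these knots are algebraically rationally slice.

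The first step is to enumerate the Lagrangian submodules. At complexity $c=1$, the factorization
\[
\Delta_n(t)=(kt-(k-1))\bigl((k-1)t-k\bigr)
\]
into two coprime, non-symmetric, irreducible factors forces the rational Alexander module to split as the direct sum of two cyclic modules; a standard argument then shows there are exactly two Lagrangian $\Qt$-submodules $P_1, P_2$ of $\A_1(K_n)$, one per factor. For $c\ge 2$, I would argue within Cha's complexity framework~\cite{Cha07}, analyzing the factorization of $\Delta_n(t^c)$, that every Lagrangian submodule of $\A_c(K_n)$ pushes forward to $P_1$ or $P_2$ under the natural map $\A_c(K_n)\to\A_1(K_n)$. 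Consequently, at each complexity only two essentially different families of Lagrangian submodules need to be examined.

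To evaluate $\rho_c^{(1)}(K_n,P_i)$, I would realize $P_i$ by a simple closed curve $\eta_i$ on the standard genus-one Seifert surface of $K_n$; both $\eta_i$ are unknotted in $S^3$ but carry nonzero self-framings determined by the Seifert form and depending linearly on $k$. Adapting the first-order signature computation of Cochran--Harvey--Leidy~\cite{CHL09, CHL10} from the infinite cyclic cover to the complexity-$c$ cyclic cover (using the detailed proof of Theorem~\ref{thm-D} provided in this paper), I expect $\rho_c^{(1)}(K_n,P_i)$ to reduce to an integral of a Levine--Tristram signature function of a twisted unknot over a sub-arc of $S^1$ whose length depends on $c$. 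Direct evaluation should then yield a closed-form expression in $k$ and $c$.

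The main obstacle is the higher-complexity bookkeeping in the second step: ruling out exotic Lagrangian submodules that might appear in $\A_c(K_n)$ for $c\ge 2$, and pinning down the exact form of $\rho_c^{(1)}$ in that setting. A secondary obstacle is ensuring that the resulting Levine--Tristram signature integral is uniformly nonzero across all $c$, since for small values of $k$ a delicate arithmetic argument may be required to exclude accidental vanishing; for large $k$, the self-framing of $\eta_i$ should dominate and force non-vanishing automatically.
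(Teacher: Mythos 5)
Your plan correctly opens with Theorem~\ref{thm-D}, but it then diverges from what actually works, and the divergence is where the real difficulty lies.

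First, you expect to rule out ``exotic Lagrangians'' in $\A_c(K_n)$ for $c\ge 2$ by examining how they ``push forward to $P_1$ or $P_2$ under the natural map $\A_c(K_n)\to\A_1(K_n)$.'' There is no such natural map in this direction; the natural map, which the paper exploits, goes the other way, $\otimes_c 1:\A(K_n)\to\A_c(K_n)$, and one takes \emph{preimages}. More seriously, the tool you need to control the Lagrangians of $\A_c(K_n)$ at all complexities simultaneously is the \emph{strong irreducibility} of the factor $kt-(k-1)$ (i.e., $kt^c-(k-1)$ is irreducible for every $c\ge 1$, as in~\cite{DPR21}). Without that input you have no way to prevent $\Delta_n(t^c)$ from factoring further for some $c$, which would produce additional isotropic submodules in $\A_c(K_n)$. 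Your proposal identifies this as an ``obstacle'' but offers no mechanism to overcome it; strong irreducibility is exactly that mechanism.

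Second, your expectation that $\rho_c^{(1)}(K_n,P_i)$ will ``reduce to an integral of a Levine--Tristram signature function over a sub-arc of $S^1$ whose length depends on $c$'' is wrong. The point of Lemma~\ref{lem-rho-with-complexity} (which is the key lemma missing from your plan) is that the complexity-$c$ first-order signature is literally \emph{equal} to the ordinary complexity-$1$ first-order signature of the preimage: $\rho_c^{(1)}(K,P_c^\pm)=\rho^{(1)}(K,P^\pm)$, by the subgroup property of $\rho$-invariants (Lemma~\ref{lem-subgp-property}). There is no $c$-dependent sub-arc, and no new computation is needed at higher complexity; one simply quotes the complexity-$1$ computation of~\cite[Example~5.10]{CHL10}, where $\rho^{(1)}(K_n,P^\pm)$ is expressed via the Levine--Tristram signature of the torus knot $-T_{k,k-1}$ and shown to be nonzero for $k>2$. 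In particular your worry about ``accidental vanishing'' and a ``delicate arithmetic argument'' for small $k$ does not arise: the cited computation already handles all $k>2$ uniformly. In short, you have the right starting theorem but have not identified the two ingredients that make the proof go through: strong irreducibility of $kt-(k-1)$ to pin down the Lagrangians at every complexity, and Lemma~\ref{lem-rho-with-complexity} to collapse the entire complexity-$c$ computation to the known complexity-$1$ case.
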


\begin{theorem}\label{thm-AC2}
  If $n = k^2$ with $k > 8$, then $K_n$ is not rationally slice.
\end{theorem}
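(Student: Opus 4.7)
The plan is to argue by contradiction using Theorem \ref{thm-D} at complexity $c = 2$. Suppose $K_{k^2}$ bounds a locally flat disk $\Delta$ in a topological rational ball $V^4$. As established in the proof of Theorem \ref{thm-ACQ}, one has the factorization
\begin{equation*}
  \Delta_{k^2}(t^2) = (kt^2 - t - k)(kt^2 + t - k),
\end{equation*}
so $c = 2$ is the smallest complexity at which the rational Alexander module $\A_2(K_{k^2})$ admits a nontrivial Lagrangian decomposition. Theorem \ref{thm-D} then supplies a Lagrangian submodule $P \subseteq \A_2(K_{k^2})$ with $\rho_2^{(1)}(K_{k^2}, P) = 0$, where $P$ is the kernel of the inclusion-induced map $\A_2(K_{k^2}) \to H_1(V - \nu\Delta;\Qt)$.

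The first step is to enumerate the candidate Lagrangians. Setting $p_\pm(t) = kt^2 \pm t - k$, a direct calculation gives $t^2 p_+(t^{-1}) = -p_-(t)$, so the two factors are mutually Alexander-dual but neither is self-dual. Consequently the Blanchfield pairing on $\A_2(K_{k^2})$ is hyperbolic between the cyclic summands $\Qt/(p_+)$ and $\Qt/(p_-)$, and the only Lagrangian submodules are $P_+ = \Qt/(p_+)$ and $P_- = \Qt/(p_-)$. Each $P_\pm$ is realized geometrically by an explicit derivative curve on a genus-one Seifert surface of $K_{k^2}$ lifted to the cover encoding the complexity-$2$ structure.

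The second step is to evaluate $\rho_2^{(1)}(K_{k^2}, P_\pm)$ using a formula parallel to \cite[Definition 4.1]{CHL10}, which expresses it as an integral of Levine-Tristram signatures of a derivative knot $J_\pm$ determined by the combinatorics of the Seifert surface of $K_{k^2}$ together with the Lagrangian $P_\pm$. Because twist knots admit a highly symmetric genus-one Seifert surface, the derivatives $J_\pm$ can be identified from the clasp diagram as explicit two-bridge (or cabled) knots whose Levine-Tristram signature functions are classically tabulated. The plan is to show that for $k > 8$ the resulting integrated signature is nonzero, contradicting the vanishing forced by Theorem \ref{thm-D}. The threshold $k > 8$ should emerge as the smallest value for which this integrated signature strictly exceeds the unavoidable error terms in the $\rho$-invariant, such as contributions from the Cheeger-Gromov bound on the ambient $4$-manifold and from the complexity correction in Cha's $\rho$-invariant with complexity.

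The main obstacle is the concrete identification of the derivative knots $J_\pm$ and the effective evaluation of their integrated Levine-Tristram signatures uniformly in $k$. Producing a workable diagram for $J_\pm$ requires carefully tracking the $2$-fold cover that realizes the complexity-$2$ Alexander module of $K_{k^2}$, and once this is done the desired lower bound on $|\rho_2^{(1)}(K_{k^2}, P_\pm)|$ should follow from the asymptotic growth of two-bridge signatures in $k$. All remaining input—namely Theorems \ref{thm-D} and \ref{thm-ACQ}—is already available, so the argument reduces to this explicit signature computation.
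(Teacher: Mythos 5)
Your outline takes a different, and ultimately flawed, route from the paper's actual proof, and several steps would not go through as written.

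First, you fix the complexity at $c=2$ without justification. Theorem~\ref{thm-D} provides a Lagrangian in $\A_c(K)$ for the complexity $c$ that the hypothetical slice disk actually realizes; this $c$ is a geometric datum, not something you get to choose. There may indeed be no Lagrangian unless $c$ is even, but you would still need to argue for a general $c$. The paper sidesteps this entirely by using Theorem~\ref{thm-CHL-Q}, which is stated in terms of isotropic submodules of the \emph{ordinary} Alexander module $\A(K)$ and is agnostic to the complexity of the rational slice disk.

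Second, and more importantly, you miss the decisive simplification. By Lemma~\ref{lem-rho-with-complexity}, for any isotropic $P_c\subseteq\A_c(K)$ one has $\rho_c^{(1)}(K,P_c)=\rho^{(1)}(K,P)$ where $P$ is the preimage of $P_c$ under $\otimes_c 1$, and $P$ is isotropic in $\A(K)$. Since $K_{k^2}$ has genus one, any isotropic submodule of $\A(K_{k^2})$ has rank at most one; and since $K_{k^2}$ is \emph{not} algebraically slice (no factorization $\Delta(t)=f(t)f(t^{-1})$), the only isotropic submodule is $P=0$. Thus $\rho_c^{(1)}(K_{k^2},P_\pm)=\rho^{(1)}(K_{k^2},0)$: your complexity-$2$ invariants collapse to the single ordinary first-order signature at the trivial submodule. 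You never observe this, so your plan to classify Lagrangians in $\A_2$, identify ``derivative knots $J_\pm$ from the clasp diagram,'' and estimate their integrated Levine--Tristram signatures is unnecessary extra machinery built on top of the object the paper directly invokes.

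Third, your proposed computation is not carried out, and the explanation you give for the threshold $k>8$ is wrong. You attribute it to ``Cheeger--Gromov bounds'' and a ``complexity correction'' in Cha's invariant, but neither of these plays any role. The threshold comes from Davis's work \cite{Dav12a,Dav12b}, which shows $\rho^{(1)}(K_{k^2},0)\neq 0$ for $k>8$ by approximating the invariant via the Cimasoni--Florens signature of derivative links of $K_{k^2}\#K_{k^2}$ following \cite[Proposition 5.7]{CHL10}. That nonvanishing statement, combined with the reduction to $P=0$ described above, is the entirety of the paper's proof; your proposal leaves both the reduction and the signature estimate as acknowledged ``main obstacles.''
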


\begin{proof}[Proof of Theorem \ref{thm-C} modulo Theorems \ref{thm-AC0} and \ref{thm-AC2}]
  By assumption, $n \neq 2^2, 3^2, \cdots, 8^2$. Suppose that $K_n$ is rationally slice. Then it is algebraically rationally slice. By Theorem \ref{thm-ACQ}, $n$ is either $k(k-1)$ or $k^2$. Consider the former case. Then by Theorem \ref{thm-AC0}, $K_n$ is rationally slice only when $n = 0$ or $2$. For the latter case, by Theorem \ref{thm-AC2}, $K_n$ is rationally slice only when $n=1$.
\end{proof}

\noindent In the rest of this section, we will provide appropriate obstructions for each case and use them to prove Theorems \ref{thm-AC0} and \ref{thm-AC2}. We first recall the definition of von Neumann $\rho$-invariant for a closed 3-manifold with $b_1 = 1$ and its basic property. Throughout this section, every manifold is orientable, compact, and connected.

\begin{definition}
  Let $M$ be a closed 3-manifold with $b_1(M)=1$, $\Gamma$ be a discrete group, and $\phi:\pi_1(M)\rightarrow \Gamma$. Suppose that $M$ bounds a 4-manifold $W$ such that the inclusion induces an isomorphism on the first homology with rational coefficient and $\phi$ extends to $\psi:\pi_1(W)\rightarrow \Gamma$ as in the commutative diagram below.

\begin{center}
\begin{tikzcd}
  \pi_1(M)\ar{r}{\phi}\ar{d} &\Gamma\\
  \pi_1(W)\ar[swap]{ur}{\psi}
\end{tikzcd}
\end{center}

Let $\sigma^{(2)}(W, \psi)$ denote the $L^2$-signature of $W$ with respect to $\psi$. Then \textit{the von Neumann $\rho$-invariant for $(M,\rho)$} is defined to be the \textit{signature defect} of $(W, \psi)$, namely: 
\begin{equation*}
    \rho(M, \phi) = \sigma^{(2)}(W, \psi) - \sigma (W).
  \end{equation*}
\end{definition}

\noindent The $L^2$-signature $\sigma^{(2)}(W,\psi)$ is, roughly speaking, the signature of the intersection form of $\psi$-covering of $W$. For a rigorous definition of $L^2$-invariant, see \cite{ChG85, Luc02, COT03, Cha08}. The value $\rho(M,\phi)$ is independent of choices of $W$ and $\psi$ as proved in \cite{CW03, COT03}.

The following lemma, called the \textit{subgroup property}, is a well-known fact for the $\rho$-invariant. For example, see \cite[Proposition 5.13]{COT03}.

\begin{lemma}\label{lem-subgp-property}
  If $j:\Gamma\rightarrow \Lambda$ is injective, then
  \begin{equation*}
    \rho(M, \phi) = \rho (M, j\circ \phi).
  \end{equation*}
\end{lemma}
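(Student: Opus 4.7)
The plan is to compute both sides of the claimed equality using the same bounding four-manifold, thereby reducing the statement to an equality of $L^2$-signatures. Since the von Neumann $\rho$-invariant is independent of the choice of bounding four-manifold and of the extending homomorphism, fix any four-manifold $W$ with $\partial W = M$ for which the inclusion induces an isomorphism on rational first homology and for which $\phi$ extends to some $\psi: \pi_1(W) \to \Gamma$. Then $j\circ \psi: \pi_1(W) \to \Lambda$ is automatically an extension of $j\circ \phi$, so both $\rho(M,\phi)$ and $\rho(M, j\circ \phi)$ may be computed as signature defects of $(W,\psi)$ and $(W, j\circ \psi)$, respectively. The ordinary signature $\sigma(W)$ cancels between the two defects, and it suffices to establish
\[
\sigma^{(2)}(W, \psi) = \sigma^{(2)}(W, j\circ \psi).
\]

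The key observation is that an injective group homomorphism $j:\Gamma \hookrightarrow \Lambda$ induces a trace-preserving inclusion of group von Neumann algebras $\mathcal{N}\Gamma \hookrightarrow \mathcal{N}\Lambda$. The twisted intersection form defining $\sigma^{(2)}(W, j\circ \psi)$ is obtained from the one defining $\sigma^{(2)}(W,\psi)$ by extension of scalars along this inclusion. Since the $L^2$-signature is defined using the canonical trace applied to the positive and negative spectral projections of the twisted intersection form operator, and since both the projections and the trace values are preserved under a trace-preserving embedding of finite von Neumann algebras, the two $L^2$-signatures coincide.

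The only subtle point is justifying this $L^2$-signature invariance rigorously. It is a standard feature of $L^2$-invariants treated in detail in the references already cited (Cheeger--Gromov \cite{ChG85}, L\"uck \cite{Luc02}, Cochran--Orr--Teichner \cite{COT03}, and Cha \cite{Cha08}). I would therefore either quote the precise statement from one of these sources or reproduce the short spectral argument sketched above; either route completes the proof.
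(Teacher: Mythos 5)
The paper does not actually prove this lemma; it simply records it as the well-known ``subgroup property'' and points the reader to \cite[Proposition 5.13]{COT03}. Your argument is precisely the standard proof one finds there: fix a bounding pair $(W,\psi)$ for $(M,\phi)$, observe that $j\circ\psi$ is an extension of $j\circ\phi$ over the same $W$, invoke the independence of $\rho$ on the choice of bounding pair so that both sides are computed as signature defects over $W$, cancel the ordinary signature, and reduce to the statement that $\sigma^{(2)}(W,\psi)=\sigma^{(2)}(W,j\circ\psi)$. That last equality is exactly the content of the cited reference, and your justification via the trace-preserving inclusion of group von Neumann algebras induced by the injection $j$ is the right one. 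So your proof is correct and is, in substance, the proof the paper is implicitly deferring to; the only part that would need to be spelled out in full generality is the $L^2$-signature invariance under induction along a trace-preserving embedding of finite von Neumann algebras, which is the spectral argument you sketch and which \cite{COT03} carries out.
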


Recall that the \textit{Alexander module} $\A(K)$ is defined as $H_1(M_K;\Qt)$, which is a torsion $\Qt$-module, and the \textit{Blanchfield form} $\Bl$ is the $\Q(t)/\Qt$-valued linking form on $\A(K)$. A submodule $P$ of $\A(K)$ is said to be \textit{isotropic} if $P\subset P^\perp$ with respect to $\Bl$. $P$ is called \textit{Lagrangian} if $P=P^\perp$.

\begin{definition}\cite{CHL09}
  Let $P$ be an isotropic submodule of $\A(K)$ with respect to $\Bl$. Define $\phi_P$ as the quotient map of $\pi_1(M_K)$ by the kernel of the following map:
  \begin{equation*}
    \pi_1^{(1)}(M_K)\rightarrow \frac{\pi_1^{(1)}(M_K)}{\pi_1^{(2)}(M_K)}=H_1(M_K;\Z[t^{\pm 1}])\rightarrow H_1(M_K;\Qt) \rightarrow \A(K)/ P.
  \end{equation*}
  \textit{The first-order signature $\rho^{(1)}(K, P)$} is defined as $\rho(M_K, \phi_P)$.
\end{definition}

\noindent Note that if $K$ is slice, then the kernel of the inclusion induced map on $\A(K)$ from $M_K$ to the slice disk complement is Lagrangian. Cochran, Harvey and Leidy \cite{CHL10} developed this tool to obstruct $K$ being slice in a more convenient way.

\begin{theorem}\cite[Theorem 4.2, 4.4]{COT03} \cite[Theorem 4.2]{CHL10}\label{thm-CHL-style}
If a knot $K$ is slice, then there exists a Lagrangian submodule $P$ of $\A(K)$ such that $\rho^{(1)}(K, P)=0$. For a slice disk $\Delta$, such a submodule $P$ is given by
\begin{equation*}
  P = \ker(\A(K)\rightarrow H_1(B^4-\nu\Delta;\Qt)).
\end{equation*}
\end{theorem}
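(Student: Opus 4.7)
The plan is to take a slice disk $\Delta \subset B^4$ for $K$, set $W := B^4 \smallsetminus \nu\Delta$ with $\partial W = M_K$ the zero-surgery on $K$, and construct the required Lagrangian as $P := \ker(\A(K) \to H_1(W;\Qt))$. As preliminaries I would record the standard facts that $W$ has the rational homology of $S^1$, that the inclusion $M_K \hookrightarrow W$ induces an isomorphism on $H_1(-;\Q)$, and that $\sigma(W)=0$ since $H_2(W;\Q)=0$. These facts put $W$ in position to serve as the bounding $4$-manifold in the definition of the $\rho$-invariant for $(M_K,\phi_P)$.

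Next I would verify that $P$ is Lagrangian with respect to the Blanchfield form $\Bl$. Isotropy $P\subseteq P^\perp$ is the familiar naturality argument: classes dying in $W$ bound $\Qt$-chains in the infinite cyclic cover of $W$, and the Blanchfield pairing of two such classes reduces to an intersection there that vanishes. The reverse containment $P^\perp\subseteq P$ is the ``half-dies'' half, and I would prove it by computing $H_1(W;\Qt)$ via Poincar\'e-Lefschetz duality over $\Qt$ applied to $(W,M_K)$ together with the long exact sequence of the pair, which identifies $\A(K)/P$ with the $\Qt$-dual of $P$ and forces the order equality $|\A(K)|=|P|^2$. With $P$ established as Lagrangian, I would then check that $\phi_P:\pi_1(M_K)\to\Gamma_P$ extends to $\psi:\pi_1(W)\to\Gamma_P$: by construction $\phi_P$ is built from the surjection onto $\A(K)/P$, and since $\A(K)/P$ injects into $H_1(W;\Qt)$ by the definition of $P$, the same recipe on the commutator subgroup of $\pi_1(W)$ assembles with abelianization to yield the desired extension.

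Finally I would appeal to the key input of \cite[Theorem 4.2]{COT03}: for a slice disk exterior $W$ and a PTFA coefficient system $\psi$ that factors through $\pi_1(W)/\pi_1(W)^{(2)}$, the $L^2$-signature $\sigma^{(2)}(W,\psi)$ equals the ordinary signature $\sigma(W)$. Combined with $\sigma(W)=0$, this gives
\[
  \rho^{(1)}(K,P) \;=\; \rho(M_K,\phi_P) \;=\; \sigma^{(2)}(W,\psi)-\sigma(W) \;=\; 0.
\]
The main obstacle is precisely this last step: the vanishing of the $L^2$-signature of a slice disk exterior for these PTFA coefficient systems is the deep analytic ingredient, resting on L\"uck approximation and the careful analysis of the derived series of the slice disk group; I would invoke it from \cite{COT03} rather than reproduce its proof.
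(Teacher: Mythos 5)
The paper does not actually prove this statement; it is cited from \cite{COT03} and \cite{CHL10}. What the paper does prove, with exactly the same skeleton, is its rational analogue, Theorem~\ref{thm-D}. Your outline (take $W=B^4-\nu\Delta$, define $P$ as the kernel of $\A(K)\to H_1(W;\Qt)$, verify $P$ is Lagrangian, extend $\phi_P$ over $\pi_1(W)$, and then invoke the vanishing of the $L^2$-signature for slice disk exteriors from \cite{COT03}) is the right shape, and your explicit verification that $P$ is Lagrangian by Poincar\'e--Lefschetz duality is a reasonable insertion that the paper elides.

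There is, however, a genuine gap in the extension step. You assert that $\phi_P:\pi_1(M_K)\to\Gamma_P$ extends to $\psi:\pi_1(W)\to\Gamma_P$, justified by ``the same recipe'' applied to $\pi_1(W)$. But that recipe, applied to $\pi_1(W)$, produces the larger group $\Lambda:=\pi_1(W)/\pi_1(W)^{(2)}_r$ (whose metabelian piece injects into $H_1(W;\Qt)$), not $\Gamma_P$: by definition of $P$, $\A(K)/P$ only injects into $H_1(W;\Qt)$ and need not equal it, so $\psi$ is naturally valued in $\Lambda\supseteq\Gamma_P$. To promote this to $\rho^{(1)}(K,P)=\sigma^{(2)}(W,\psi)-\sigma(W)$ you need two additional facts, both of which the paper supplies in the proof of Theorem~\ref{thm-D}: \emph{(i)} the induced map $\Gamma_P\to\Lambda$ is injective --- which the paper checks via a short exact sequence/five-lemma argument culminating in the injectivity of $\beta$ --- and \emph{(ii)} the subgroup property of the von Neumann $\rho$-invariant (Lemma~\ref{lem-subgp-property}), so that $\rho(M_K,\phi_P)=\rho(M_K,\beta\circ\phi_P)$ and it is harmless that $\psi$ lands in $\Lambda$ rather than $\Gamma_P$. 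As written, your proposal leaps from ``$\A(K)/P$ injects into $H_1(W;\Qt)$'' to ``$\psi$ extends with target $\Gamma_P$,'' and without (i) and (ii) there is no identification of $\rho^{(1)}(K,P)$ with the signature defect computed on $W$.
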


Cha \cite{Cha07} introduced the notion of complexity for the Alexander module, the Blanchfield form, and the $\rho$-invariants associated in the fashion of \cite{COT03}. We recall his construction and introduce complexity to the first-order signature in \cite{CHL10}.

\begin{definition}
  Suppose that a knot $K$ bounds a slice disk $\Delta$ in a rational ball $V^4$. $K$ is said to be \textit{rationally slice with complexity $c$} if the inclusion of the knot complement into the disk complement induces multiplication by $c$ on the free part of the first homology as:
  \begin{equation*}
    H_1(S^3-K;\Z)\xrightarrow{\times c} H_1(V - \Delta;\Z)/torsion.
  \end{equation*}
  For a closed 3-manifold $M$ bounding a 4-manifold $W$ with $b_1(M) = b_1(W) = 1$, We also say that $M$ \textit{bounds $W$ with complexity $c$} if the inclusion induces multiplication by $c$ on the free part of the first homology as:
  \begin{equation*}
    H_1(M;\Z)\xrightarrow{\times c} H_1(W;\Z)/torsion.
  \end{equation*}
\end{definition}
\noindent Note that $K$ is rationally slice with complexity $c$ if and only if the $0$-surgery $M_K$ bounds some $\Q HS^1\times B^3$ $W$ with complexity $c$ \cite{CFHH13}.

\begin{definition}
  The \textit{Alexander module $\A_c(K)$ with complexity} $c$ is defined as
  \begin{equation*}
    \A_c(K) = \A(K)\otimes_c\Qt,
  \end{equation*} where the tensor product $\otimes_c$ is given by the action of $t\in\Qt$ as the multiplication by $t^c$.
\end{definition}

\noindent Suppose the $0$-surgery $M_K$ along $K$ bounds $W$ with complexity $c$ with $b_1(W) = 1$. Then the following diagram commutes:
\begin{center}
\begin{tikzcd}
  \pi_1(M_K)\ar{r}{\phi_0}\ar{d} &\Z \ar{r}{t\mapsto t^c} &\Z\\
  \pi_1(W)\ar[swap]{urr}{\psi_0}
\end{tikzcd}
\end{center}
where $\phi_0$ is the abelianization and $\psi_0$ is the abelianization modulo torsion.
Thus, we obtain the inclusion induced map $$j_c:\A_c(K)\rightarrow H_1(W;\Qt),$$ where the right one will be denoted by $\A(W)$.
\noindent The Blanchfield form $\Bl_c$ on $\A_c(K)$ naturally extends $\Bl$ on $\A(K)$ in the sense that:
\begin{definition}\cite[Theorem 5.16]{Cha07}
  The \textit{Blanchfield form $\Bl_c$ on} $\A_c(K)$ is defined as:
  \begin{equation*}
    \Bl_c(x\otimes_c f(t), y\otimes_c g(t)) = f(t)\cdot h(\Bl(x, y))\cdot g(t^{-1}),
  \end{equation*}
  where $h:\frac{\Q(t)}{\Qt}\xrightarrow{t\mapsto t^c}\frac{\Q(t)}{\Qt}.$
\end{definition}

\begin{definition}
  Let $P$ be an isotropic submodule of $\A_c(K)$ with respect to $\Bl_c$. Define $\phi_{c,P}$ as the quotient map of $\pi_1(M_K)$ by the kernel of the following map:
  \begin{equation*}
    \pi_1^{(1)}(M_K)\rightarrow \A_c(K)/ P.
  \end{equation*}
  \textit{The first-order signature $\rho_c^{(1)}(K, P)$ with complexity $c$} is defined as $\rho(M_K, \phi_{c, P})$.
\end{definition}
Now we provide a rational slice obstruction as a rational analogue of Theorem \ref{thm-CHL-style}. This theorem was briefly proved in \cite{Lee24}, where it was not directly used there. For the reader's convenience, we give a proof here in detail.
\begin{proof}[Proof of Theorem \ref{thm-D}]
  Let $W$ be the disk complement $V-\nu\Delta$. $W$ is bounded by the $0$-surgery $M$ along $K$ and $P$ is the kernel of the inclusion induced map $j_c$ on the Alexander module with complexity $c$. For a group $G$, recall that \textit{the $n$-th rational derived subgroup} is $G_\Q^{(n)} = \{g\in G_\Q^{(n-1)} | g^r \in [G_\Q^{(n-1)},G_\Q^{(n-1)}] \text{ for some }r \neq 0 \}$ and $G_\Q^{(0)} = G$ \cite{Coc04}. Consider the following commutative diagram.
\begin{center}
\begin{tikzcd}
  &\dfrac{\pi_1(M)^{(1)}}{\pi_1(M)^{(2)}} \ar{dd}{j_*} \ar{rr} \ar{dr} & & \A_c(K)/P \ar[hook]{dd} \\
  & &\dfrac{\pi_1(M)^{(1)}}{\ker\phi_{c, P}}\ar[dashed,swap]{dl}{\alpha} \ar[hook]{ur} &\\
&\dfrac{\pi_1(W)^{(1)}}{\pi_1(W)^{(2)}_\Q} \ar[hook]{rr} & &\A(W)
\end{tikzcd}
\end{center}
Choose an element $g \in \pi_1(M)^{(1)}$ such that $\phi_{c, P}(g) = 0$. Then $j_*(g) = 0$ so that the dashed map $\alpha$ is induced. Moreover, the map is injective by commutativity of the diagram. To claim that $\rho(M_K, \phi_{c, P})$ is the signature defect of $(W, \psi)$, by Lemma \ref{lem-subgp-property}, it is enough to show that $\beta$ is injective in the following diagram:

\begin{center}
\begin{tikzcd}
  \pi_1(M) \ar{d}{j_*} \ar{r}{\phi_{c,P}} & \dfrac{\pi_1(M)}{\ker{\phi_{c,P}}} \ar{d}{\beta} \\
  \pi_1(W) \ar{r}{\psi} & \dfrac{\pi_1(W)}{\pi_1(W)^{(2)}_\Q}
\end{tikzcd}
\end{center}

\noindent where $\psi$ is the obvious quotient map. Consider the following commutative diagram, where rows are exact and all vertical maps are induced by the inclusion.
\begin{center}
\begin{tikzcd}
  0 \ar{r} &\dfrac{\pi_1(M)^{(1)}}{\ker{\phi_{c,P}}}\ar{d}{\alpha} \ar{r} &\dfrac{\pi_1(M)}{\ker{\phi_{c,P}}}\ar{d}{\beta} \ar{r} &\dfrac{\pi_1(M)}{\pi_1(M)^{(1)}}\ar{d}{\gamma} \ar{r} &0\\
0 \ar{r} &\dfrac{\pi_1(W)^{(1)}}{\pi_1(W)^{(2)}_\Q}\ar{r} &\dfrac{\pi_1(W)}{\pi_1(W)^{(2)}_\Q}\ar{r} &\dfrac{\pi_1(W)}{\pi_1(W)^{(1)}}\ar{r} &0
\end{tikzcd}
\end{center}
We have already checked that $\alpha$ is injective. Note that $\gamma$ is the map on the first homology, which is clearly injective. Thus, $\beta$ is injective and the proof is done.
\end{proof}

Now we relate the first-order signature with complexity with the ordinary one without complexity. Regarding $\A_c(K)$ as $\A(K)\otimes_c \Qt$, we obtain the following identification. See also \cite[Section 5.2]{Cha07}.

\begin{lemma}\label{lem-rho-with-complexity}
  Suppose $P_c$ is an isotropic submodule of $\A_c(K)$ with respect to $\Bl_c$. Let $P$ be the preimage of $P_c$ under $\A(K)\xrightarrow{\otimes_c 1} \A_c(K)$. Then $P$ is an isotropic submodule of $\A(K)$ with respect to $\Bl$. Moreover,
  \begin{equation*}
    \rho_c^{(1)}(K, P_c) = \rho^{(1)}(K, P).
  \end{equation*}
\end{lemma}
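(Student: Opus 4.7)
The lemma has two assertions: that $P$ is isotropic for $\Bl$, and that the two first-order signatures agree. My plan is to derive both from a single key fact, namely the injectivity of the map $h \colon \Q(t)/\Qt \to \Q(t)/\Qt$ induced by $t \mapsto t^c$, together with a short diagram chase comparing the representations $\phi_P$ and $\phi_{c,P_c}$.

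For isotropy, given $x, y \in P$, the images $x \otimes_c 1$ and $y \otimes_c 1$ lie in $P_c$, so isotropy of $P_c$ and the formula defining $\Bl_c$ yield $0 = \Bl_c(x \otimes_c 1,\, y \otimes_c 1) = h(\Bl(x,y))$. To conclude $\Bl(x,y) = 0$, I need $h$ to be injective. This is a pole-counting argument: if $r(t) \in \Q(t) \smallsetminus \Qt$, then $r(t)$ has a pole at some $\alpha \in \overline{\Q}^\times$, and then $r(t^c)$ has a pole at every $c$th root of $\alpha$, so $r(t^c) \notin \Qt$. This is the contrapositive of injectivity of $h$.

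For the equality of $\rho$-invariants, I will show that $\phi_P$ and $\phi_{c,P_c}$ are quotient maps of $\pi_1(M_K)$ by the same normal subgroup, so their $\rho$-invariants coincide tautologically. Since $P$ is by hypothesis the preimage of $P_c$ under $\otimes_c 1 \colon \A(K) \to \A_c(K)$, the induced map on quotients $\A(K)/P \hookrightarrow \A_c(K)/P_c$ is injective. Therefore the two kernels
$$
\ker\bigl(\pi_1^{(1)}(M_K) \to \A(K)/P\bigr) \quad\text{and}\quad \ker\bigl(\pi_1^{(1)}(M_K) \to \A_c(K)/P_c\bigr)
$$
agree as subgroups of $\pi_1^{(1)}(M_K)$. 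Since $P$ and $P_c$ are $\Qt$-submodules, they are preserved by the deck-transformation action of $\pi_1(M_K)/\pi_1^{(1)}(M_K) \cong \Z$, and hence each of these kernels is normal in all of $\pi_1(M_K)$. Consequently $\phi_P$ and $\phi_{c,P_c}$ are literally the same quotient map of $\pi_1(M_K)$, and the equality $\rho^{(1)}(K,P) = \rho(M_K,\phi_P) = \rho(M_K,\phi_{c,P_c}) = \rho_c^{(1)}(K,P_c)$ follows immediately.

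There is no serious obstacle. The only substantive calculation is the injectivity of $h$, which is elementary; everything else is diagram chasing. The one subtlety worth being explicit about is the existence of the injection $\A(K)/P \hookrightarrow \A_c(K)/P_c$, but this is immediate from the defining property of $P$ as the preimage of $P_c$, since then $P$ is exactly the kernel of the composition $\A(K) \to \A_c(K) \to \A_c(K)/P_c$.
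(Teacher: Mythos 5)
Your proof is correct and follows the same route as the paper's: isotropy via injectivity of $h$, and equality of $\rho$-invariants via the induced injection $\A(K)/P \hookrightarrow \A_c(K)/P_c$ that results from taking $P$ to be the full preimage of $P_c$. The only cosmetic difference is that you observe directly that $\phi_P$ and $\phi_{c,P_c}$ are the same quotient map of $\pi_1(M_K)$, whereas the paper reaches the same conclusion by citing the subgroup property (Lemma \ref{lem-subgp-property}); you also spell out the (easy) injectivity of $h$ by pole-counting, which the paper treats as immediate.
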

\begin{proof}
  Let $x$ be an element in $P$. For any $y\in P$, it is clear that $x\otimes_c 1 \in P_c$ and $y\otimes_c 1 \in P_c \subset P_c^\perp$. Then $\Bl_c(x\otimes_c 1, y\otimes_c 1) = 0$. By definition, we obtain the following identity:
  \begin{align*}
    \Bl_c(x\otimes_c 1, y\otimes_c 1) = 1\cdot h(\Bl(x, y))\cdot 1 = h(\Bl(x, y)).
  \end{align*}
  Since $h$ is clearly injective, $\Bl(x, y) = 0$. Thus, $P\subset P^\perp$ with respect to $\Bl$.

  We now prove that two $\rho$-invariants have the same value. Since the map $\otimes_c 1$ is obviously injective, $P$ is exactly the kernel of $\A(K)\xrightarrow{\otimes_c 1} \A_c(K)\rightarrow \A_c(K)/P_c$ by definition of $P$. Then we have the induced map $\overline{h}:\A(K)/P \rightarrow \A_c(K)/P_c$, which is injective. Thus, by Lemma \ref{lem-subgp-property},
    $$\rho^{(1)}(K, P) = \rho (M_K, \phi_P) = \rho (M_K, \phi_{c, P_c}) = \rho_c^{(1)} (K, P_c).$$
\end{proof}

\noindent One can also see that $\otimes_c 1$ is the same as the map $t\mapsto t^c$ regarding $\A_c(K)$ as $H_1(M_K;\Qt)$ with the local coefficient system from $\pi_1(M_K)\rightarrow \Z\xrightarrow{\times c}\Z$, where the first map is the abelianization.

\begin{remark}
  Note that $P$ in Lemma \ref{lem-rho-with-complexity} might be trivial since $\otimes_c 1$ does not surject in general. For example, let $K$ be the figure-eight knot and $c = 2$. Then the submodule $P_c$ of $\A_c(K)$ annihilated by the factor $t^2-t-1$ of $t^4-3t^2+1$ is isotropic with respect to $\Bl_c$. However, there are no nontrivial isotropic submodules of $\A(K)$ since the Alexander polynomial $t^2-3t+1$ is irreducible. Note that $t^2-t-1\in \A_c(K)$ is not in the image of $\otimes_c 1$ and hence, the preimage $P$ of $P_c$ is 0. See also \cite[p1435]{CHL09}.
\end{remark}

Recall that the first signatures for algebraically slice twist knots $K_n$ with $n\neq 0, 2$ are known to be nonzero as computed in \cite[Example 5.10]{CHL10}. Based on this fact, we can now prove Theorem \ref{thm-AC0} by combining Theorem \ref{thm-D} and Lemma \ref{lem-rho-with-complexity}.
\begin{proof}[Proof of Theorem \ref{thm-AC0}]
  Suppose that $K_n$ is rationally slice with some complexity $c$. Then by Theorem \ref{thm-D}, there exists a Lagrangian submodule $P_c$ of $\A_c(K)$ with respect to $\Bl_c$ such that $\rho_c^{(1)}(K, P_c) = 0$.

  Consider the Alexander polynomial $\Delta_n(t)$ of $K_n$. Note that $\Delta_n(t) = (kt - (k-1))(\left( k-1)t - k \right)$. Since the polynomial $kt - (k - 1)$ is strongly irreducible \cite[Proof of Corollary 3.6]{DPR21}, $P_c$ is annihilated by either $kt^c - (k-1)$ or $(k-1)t^c - k$. Let $P_c^+$ be the Lagrangian annihilated by the former, and let $P_c^-$ be the Lagrangian annihilated by the latter. Let $P^\pm$ be the preimage of $P_c^\pm$ under $\otimes_c 1$. Then $P^\pm$ is annihilated by $kt - (k-1)$ and $(k-1)t - k$, respectively. By Lemma \ref{lem-rho-with-complexity}, $\rho_c^{(1)}(K, P_c^\pm)$ is the same as $\rho^{(1)}(K, P^\pm)$, which can be computed by integrating the Levine-Tristram signature of the torus knot $-T_{k, k- 1}$ as \cite[Example 5.10]{CHL10}. Such values are nonzero whenever $n$ is neither $0$ nor $2$, which contradicts our assumption. Therefore, when $n = k (k-1)$, then $K_n$ is not rationally slice with any complexity $c$ unless $n=0$ or $2$.
\end{proof}

Now it remains to prove the last case, Theorem \ref{thm-AC2}. Before we prove it, we first state a rational slice obstruction developed in \cite{CHL09}. Note that the difference of the following theorem from Theorem \ref{thm-CHL-style} is that $P$ may not be a Lagrangian. This can be simply recovered from Theorem \ref{thm-D} and Lemma \ref{lem-rho-with-complexity}.

\begin{theorem}\cite[Proposition 5.8]{CHL09}\label{thm-CHL-Q}
  If $K$ is rationally slice, then one of the first order $L^2$-signatures of $K$ vanishes. In other words, there exists an isotropic submodule $P$ of $\A(K)$ with respect to $\Bl$ such that $\rho^{(1)}(K, P)=0$.
\end{theorem}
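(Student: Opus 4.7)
The plan is to deduce Theorem \ref{thm-CHL-Q} as a direct corollary of Theorem \ref{thm-D} and Lemma \ref{lem-rho-with-complexity}. Suppose $K$ bounds a locally flat slice disk $\Delta$ in a rational ball $V^4$. The inclusion of the knot exterior into the disk exterior induces a map $\Z\cong H_1(S^3-K;\Z)\to H_1(V-\nu\Delta;\Z)/\text{torsion}\cong \Z$, which must be multiplication by some nonzero integer $c$ (nonzero because $V$ has the rational homology of $B^4$ and $\Delta$ is homologous, rationally, to a disk bounded by the longitude). Hence $K$ is rationally slice with complexity $c$ for this integer $c$.

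By Theorem \ref{thm-D}, we obtain a Lagrangian submodule
\begin{equation*}
  P_c = \ker\bigl(\A_c(K)\to H_1(V-\nu\Delta;\Qt)\bigr)
\end{equation*}
of $\A_c(K)$ with respect to $\Bl_c$ such that $\rho_c^{(1)}(K, P_c)=0$. Let
\begin{equation*}
  P=(\otimes_c 1)^{-1}(P_c)\subset \A(K)
\end{equation*}
be the preimage of $P_c$ under the natural map $\otimes_c 1\colon \A(K)\to \A_c(K)$. Lemma \ref{lem-rho-with-complexity} then tells us two things at once: first, that $P$ is isotropic in $\A(K)$ with respect to $\Bl$, and second, that $\rho^{(1)}(K, P)=\rho_c^{(1)}(K, P_c)=0$. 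This is precisely the conclusion.

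The only subtlety worth flagging, and the reason the conclusion produces merely an isotropic submodule rather than a Lagrangian one, is that the coefficient-change map $\otimes_c 1$ is injective but not surjective, so the preimage of a Lagrangian need not be Lagrangian; it is only guaranteed to be isotropic, as verified in Lemma \ref{lem-rho-with-complexity}. In particular $P$ can even be trivial, as illustrated by the figure-eight example recalled in the remark following that lemma, but for the statement of Theorem \ref{thm-CHL-Q} this is harmless since isotropy (with $\rho^{(1)}=0$) is all that is claimed.
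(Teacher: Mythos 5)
Your proof is correct and follows exactly the route the paper itself indicates in the sentence immediately preceding the theorem, where it says the statement ``can be simply recovered from Theorem~\ref{thm-D} and Lemma~\ref{lem-rho-with-complexity}''; you have filled in the details of that remark, including correctly noting that the preimage $P=(\otimes_c 1)^{-1}(P_c)$ is only guaranteed to be isotropic rather than Lagrangian. The brief justification that the complexity $c$ is nonzero is also fine, since $H_1(V-\nu\Delta;\Q)\cong\Q$ is generated by the meridian when $V$ is a rational ball.
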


\begin{proof}[Proof of Theorem \ref{thm-AC2}]
  Note that all the twist knots are genus one knots so that any isotropic submodule of $\A(K_n)$ has $\rk_\Q \le 1$. However, $K_n$ in the assumption is not algebraically slice. Thus, the only possible isotropic submodule of $\A(K_n)$ with respect to $\Bl$ is the trivial one $P = 0$. Thus, to use Theorem \ref{thm-CHL-Q}, it is enough to show that $\rho^{(1)}(K_n, 0) \neq 0$. Davis \cite{Dav12a, Dav12b} proved that the values for $k > 8$ does not vanish by approximating them from the Cimasoni-Florens signature \cite{CF08} of derivative links of $K_n \# K_n$, based on \cite[Proposition 5.7]{CHL10}. Therefore, for $n=k^2 > 64$, $K_n$ is not rationally slice.
\end{proof}
\bibliographystyle{amsalpha}
\bibliography{ref}

\providecommand{\bysame}{\leavevmode\hbox to3em{\hrulefill}\thinspace}
\providecommand{\MR}{\relax\ifhmode\unskip\space\fi MR }
\providecommand{\MRhref}[2]{%
  \href{http://www.ams.org/mathscinet-getitem?mr=#1}{#2}
}
\providecommand{\href}[2]{#2}
\begin{thebibliography}{AKPR21}

\bibitem[AGLL20]{AGLL20}
Paolo Aceto, Marco Golla, Kyle Larson, and Ana~G Lecuona, \emph{Surgeries on
  torus knots, rational balls, and cabling}, preprint arXiv:2008.06760 (2020),
  To appear in Ann. Inst. Fourier (Grenoble).

\bibitem[AKPR21]{AKPR21}
Paolo Aceto, Min~Hoon Kim, JungHwan Park, and Arunima Ray, \emph{Pretzel links,
  mutation, and the slice-ribbon conjecture}, Math. Res. Lett. \textbf{28}
  (2021), no.~4, 945--966. \MR{4344693}

\bibitem[AL18]{AL18}
Selman Akbulut and Kyle Larson, \emph{Brieskorn spheres bounding rational
  balls}, Proc. Amer. Math. Soc. \textbf{146} (2018), no.~4, 1817--1824.
  \MR{3754363}

\bibitem[AMP22]{AMP22}
Paolo Aceto, Duncan McCoy, and JungHwan Park, \emph{Definite fillings of lens
  spaces}, preprint arXiv:2208.02586 (2022), To appear in J. Differential Geom.

\bibitem[AMP24]{AMP24}
\bysame, \emph{A survey on embeddings of 3-manifolds in definite 4-manifolds},
  preprint arXiv:2407.03692 (2024).

\bibitem[AS68]{AS68}
M.~F. Atiyah and I.~M. Singer, \emph{The index of elliptic operators. {III}},
  Ann. of Math. (2) \textbf{87} (1968), 546--604. \MR{236952}

\bibitem[BD12]{BD12}
Evan~M. Bullock and Christopher~W. Davis, \emph{Strong coprimality and strong
  irreducibility of {A}lexander polynomials}, Topology Appl. \textbf{159}
  (2012), no.~1, 133--143. \MR{2852954}

\bibitem[CF08]{CF08}
David Cimasoni and Vincent Florens, \emph{Generalized {S}eifert surfaces and
  signatures of colored links}, Trans. Amer. Math. Soc. \textbf{360} (2008),
  no.~3, 1223--1264. \MR{2357695}

\bibitem[CFHH13]{CFHH13}
Tim~D. Cochran, Bridget~D. Franklin, Matthew Hedden, and Peter~D. Horn,
  \emph{Knot concordance and homology cobordism}, Proc. Amer. Math. Soc.
  \textbf{141} (2013), no.~6, 2193--2208. \MR{3034445}

\bibitem[CG78]{CG78}
Andrew~J. Casson and Cameron~McA. Gordon, \emph{On slice knots in dimension
  three}, Algebraic and geometric topology ({P}roc. {S}ympos. {P}ure {M}ath.,
  {S}tanford {U}niv., {S}tanford, {C}alif., 1976), {P}art 2, Proc. Sympos. Pure
  Math., vol. XXXII, Amer. Math. Soc., Providence, RI, 1978, pp.~39--53.
  \MR{520521}

\bibitem[CG85]{ChG85}
Jeff Cheeger and Mikhael Gromov, \emph{Bounds on the von {N}eumann dimension of
  {$L^2$}-cohomology and the {G}auss-{B}onnet theorem for open manifolds}, J.
  Differential Geom. \textbf{21} (1985), no.~1, 1--34. \MR{806699}

\bibitem[CG86]{CG86}
Andrew~J. Casson and Cameron~McA. Gordon, \emph{Cobordism of classical knots},
  \`{A} la recherche de la topologie perdue, Progr. Math., vol.~62,
  Birkh\"{a}user Boston, Boston, MA, 1986, With an appendix by P. M. Gilmer,
  pp.~181--199. \MR{900252}

\bibitem[Cha07]{Cha07}
Jae~Choon Cha, \emph{The structure of the rational concordance group of knots},
  Mem. Amer. Math. Soc. \textbf{189} (2007), no.~885, x+95. \MR{2343079}

\bibitem[Cha08]{Cha08}
\bysame, \emph{Topological minimal genus and {$L^2$}-signatures}, Algebr. Geom.
  Topol. \textbf{8} (2008), no.~2, 885--909. \MR{2443100}

\bibitem[CHH13]{CHH13}
Tim~D. Cochran, Shelly~L. Harvey, and Peter~D. Horn, \emph{Filtering smooth
  concordance classes of topologically slice knots}, Geom. Topol. \textbf{17}
  (2013), no.~4, 2103--2162. \MR{3109864}

\bibitem[CHL09]{CHL09}
Tim~D. Cochran, Shelly~L. Harvey, and Constance Leidy, \emph{Knot concordance
  and higher-order {B}lanchfield duality}, Geom. Topol. \textbf{13} (2009),
  no.~3, 1419--1482. \MR{2496049}

\bibitem[CHL10]{CHL10}
\bysame, \emph{Derivatives of knots and second-order signatures}, Algebr. Geom.
  Topol. \textbf{10} (2010), no.~2, 739--787. \MR{2606799}

\bibitem[CK02]{CK02}
Jae~Choon Cha and Ki~Hyoung Ko, \emph{Signatures of links in rational homology
  spheres}, Topology \textbf{41} (2002), no.~6, 1161--1182. \MR{1923217}

\bibitem[CK21]{CK21}
Jae~Choon Cha and Min~Hoon Kim, \emph{The bipolar filtration of topologically
  slice knots}, Adv. Math. \textbf{388} (2021), Paper No. 107868, 32.
  \MR{4283760}

\bibitem[CO93]{CO93}
Tim~D. Cochran and Kent~E. Orr, \emph{Not all links are concordant to boundary
  links}, Ann. of Math. (2) \textbf{138} (1993), no.~3, 519--554. \MR{1247992}

\bibitem[Coc04]{Coc04}
Tim~D. Cochran, \emph{Noncommutative knot theory}, Algebr. Geom. Topol.
  \textbf{4} (2004), 347--398. \MR{2077670}

\bibitem[COT03]{COT03}
Tim~D. Cochran, Kent~E. Orr, and Peter Teichner, \emph{Knot concordance,
  {W}hitney towers and {$L^2$}-signatures}, Ann. of Math. (2) \textbf{157}
  (2003), no.~2, 433--519. \MR{1973052}

\bibitem[CW03]{CW03}
Stanley Chang and Shmuel Weinberger, \emph{On invariants of {H}irzebruch and
  {C}heeger-{G}romov}, Geom. Topol. \textbf{7} (2003), 311--319. \MR{1988288}

\bibitem[Dav12a]{Dav12a}
Christopher~W. Davis, \emph{Computing the rho-invariants of links via the
  signature of colored links with applications to the linear independence of
  twist knots}, preprint arXiv:1201.6068 (2012).

\bibitem[Dav12b]{Dav12b}
\bysame, \emph{Von {N}eumann rho invariants and torsion in the topological knot
  concordance group}, Algebr. Geom. Topol. \textbf{12} (2012), no.~2, 753--789.
  \MR{2914617}

\bibitem[DHST21]{DHST21}
Irving Dai, Jennifer Hom, Matthew Stoffregen, and Linh Truong, \emph{More
  concordance homomorphisms from knot {F}loer homology}, Geom. Topol.
  \textbf{25} (2021), no.~1, 275--338. \MR{4226231}

\bibitem[Don87]{Don87}
S.~K. Donaldson, \emph{The orientation of {Y}ang-{M}ills moduli spaces and
  {$4$}-manifold topology}, J. Differential Geom. \textbf{26} (1987), no.~3,
  397--428. \MR{910015}

\bibitem[DPR21]{DPR21}
Christopher~W. Davis, JungHwan Park, and Arunima Ray, \emph{Linear independence
  of cables in the knot concordance group}, Trans. Amer. Math. Soc.
  \textbf{374} (2021), no.~6, 4449--4479. \MR{4251235}

\bibitem[Fic84]{Fic84}
Henry~Clay Fickle, \emph{Knots, {${\bf Z}$}-homology {$3$}-spheres and
  contractible {$4$}-manifolds}, Houston J. Math. \textbf{10} (1984), no.~4,
  467--493. \MR{774711}

\bibitem[FM16]{FM16}
Peter Feller and Duncan McCoy, \emph{On 2-bridge knots with differing smooth
  and topological slice genera}, Proc. Amer. Math. Soc. \textbf{144} (2016),
  no.~12, 5435--5442. \MR{3556284}

\bibitem[FQ90]{FQ90}
Michael~H. Freedman and Frank Quinn, \emph{Topology of 4-manifolds}, Princeton
  Mathematical Series, vol.~39, Princeton University Press, Princeton, NJ,
  1990. \MR{1201584}

\bibitem[Fre82]{Fre82}
Michael~H. Freedman, \emph{The topology of four-dimensional manifolds}, J.
  Differential Geom. \textbf{17} (1982), no.~3, 357--453. \MR{679066}

\bibitem[FS84]{FS84}
Ronald Fintushel and Ronald~J. Stern, \emph{A {$\mu$}-invariant one homology
  {$3$}-sphere that bounds an orientable rational ball}, Four-manifold theory
  ({D}urham, {N}.{H}., 1982), Contemp. Math., vol.~35, Amer. Math. Soc.,
  Providence, RI, 1984, pp.~265--268. \MR{780582}

\bibitem[GJ11]{GJ11}
Joshua Greene and Stanislav Jabuka, \emph{The slice-ribbon conjecture for
  3-stranded pretzel knots}, Amer. J. Math. \textbf{133} (2011), no.~3,
  555--580. \MR{2808326}

\bibitem[GO25]{GO25}
Joshua~Evan Greene and Brendan Owens, \emph{Alternating links, rational balls,
  and cube tilings}, Journal of the European Mathematical Society (2025).

\bibitem[HKL16]{HKL16}
Matthew Hedden, Se-Goo Kim, and Charles Livingston, \emph{Topologically slice
  knots of smooth concordance order two}, J. Differential Geom. \textbf{102}
  (2016), no.~3, 353--393. \MR{3466802}

\bibitem[Hom14]{Hom14}
Jennifer Hom, \emph{The knot {F}loer complex and the smooth concordance group},
  Comment. Math. Helv. \textbf{89} (2014), no.~3, 537--570. \MR{3260841}

\bibitem[HW16]{HW16}
Jennifer Hom and Zhongtao Wu, \emph{Four-ball genus bounds and a refinement of
  the {O}zsv\'{a}th-{S}zab\'{o} tau invariant}, J. Symplectic Geom. \textbf{14}
  (2016), no.~1, 305--323. \MR{3523259}

\bibitem[Ker65]{Ker65}
Michel~A. Kervaire, \emph{Les n\oe uds de dimensions sup\'erieures}, Bull. Soc.
  Math. France \textbf{93} (1965), 225--271. \MR{189052}

\bibitem[Kim23]{Kim23}
Taehee Kim, \emph{Knot reversal and rational concordance}, Bull. Lond. Math.
  Soc. \textbf{55} (2023), no.~3, 1210--1221. \MR{4599109}

\bibitem[Kir97]{Kir97}
Robion~C. Kirby, \emph{Problems in low-dimensional topology}, Proceedings 1993
  Georgia International Topology Confence, Geometric Topology \textbf{2}
  (1997).

\bibitem[KL99]{KL99}
Paul Kirk and Charles Livingston, \emph{Twisted {A}lexander invariants,
  {R}eidemeister torsion, and {C}asson-{G}ordon invariants}, Topology
  \textbf{38} (1999), no.~3, 635--661. \MR{1670420}

\bibitem[Lec12]{Lec12}
Ana~G. Lecuona, \emph{On the slice-ribbon conjecture for {M}ontesinos knots},
  Trans. Amer. Math. Soc. \textbf{364} (2012), no.~1, 233--285. \MR{2833583}

\bibitem[Lec15]{Lec15}
\bysame, \emph{On the slice-ribbon conjecture for pretzel knots}, Algebr. Geom.
  Topol. \textbf{15} (2015), no.~4, 2133--2173. \MR{3402337}

\bibitem[Lee24]{Lee24}
Jaewon Lee, \emph{Obstructing two-torsion in the rational knot concordance
  group}, preprint arXiv:2406.12761 (2024).

\bibitem[Lev69]{Lev69}
Jerome Levine, \emph{Invariants of knot cobordism}, Invent. Math. \textbf{8}
  (1969), 98--110; addendum, ibid. {\bf 8 (1969), 355}. \MR{253348}

\bibitem[Lis07a]{Lis07a}
Paolo Lisca, \emph{Lens spaces, rational balls and the ribbon conjecture},
  Geom. Topol. \textbf{11} (2007), 429--472. \MR{2302495}

\bibitem[Lis07b]{Lis07b}
\bysame, \emph{Sums of lens spaces bounding rational balls}, Algebr. Geom.
  Topol. \textbf{7} (2007), 2141--2164. \MR{2366190}

\bibitem[Lis17]{Lis17}
\bysame, \emph{On 3-braid knots of finite concordance order}, Trans. Amer.
  Math. Soc. \textbf{369} (2017), no.~7, 5087--5112. \MR{3632561}

\bibitem[L{\"{u}}c02]{Luc02}
Wolfgang L{\"{u}}ck, \emph{{$L^2$}-invariants of regular coverings of compact
  manifolds and {CW}-complexes}, Handbook of geometric topology, North-Holland,
  Amsterdam, 2002, pp.~735--817. \MR{1886681}

\bibitem[Mil68]{Mil68}
John~W. Milnor, \emph{Infinite cyclic coverings}, Conference on the {T}opology
  of {M}anifolds ({M}ichigan {S}tate {U}niv., {E}. {L}ansing, {M}ich., 1967),
  The Prindle, Weber \& Schmidt Complementary Series in Mathematics, vol. Vol.
  13, Prindle, Weber \& Schmidt, Boston, Mass.-London-Sydney, 1968,
  pp.~115--133. \MR{242163}

\bibitem[MY87]{MY87}
Yukio Matsumoto and Akio Yamada, \emph{Determination of sliceness of the doubly
  twisted knots}, Proceeding to Intelligence of Low-dimensional Topology at
  RIMS Kokyuroku \textbf{624} (1987).

\bibitem[OS03a]{OS03a}
Peter Ozsv\'ath and Zolt\'an Szab\'o, \emph{Absolutely graded {F}loer
  homologies and intersection forms for four-manifolds with boundary}, Adv.
  Math. \textbf{173} (2003), no.~2, 179--261. \MR{1957829}

\bibitem[OS03b]{OS03b}
Peter~S. Ozsv\'{a}th and Zolt\'{a}n Szab\'{o}, \emph{Knot {F}loer homology and
  the four-ball genus}, Geom. Topol. \textbf{7} (2003), 615--639. \MR{2026543}

\bibitem[OSS17]{OSS17}
Peter~S. Ozsv\'{a}th, Andr\'{a}s~I. Stipsicz, and Zolt\'{a}n Szab\'{o},
  \emph{Concordance homomorphisms from knot {F}loer homology}, Adv. Math.
  \textbf{315} (2017), 366--426. \MR{3667589}

\bibitem[Ras03]{Ras03}
Jacob~A. Rasmussen, \emph{Floer homology and knot complements}, ProQuest LLC,
  Ann Arbor, MI, 2003, Thesis (Ph.D.)--Harvard University. \MR{2704683}

\bibitem[{\c{S}}av24]{Sav24}
O\u{g}uz {\c{S}}avk, \emph{A survey of the homology cobordism group}, Bull.
  Amer. Math. Soc. (N.S.) \textbf{61} (2024), no.~1, 119--157. \MR{4678574}

\bibitem[Sie75]{Sie75}
L~Siebenman, \emph{Exercises sur les noeuds rationnels}, mimeographed notes
  (1975).

\bibitem[Sim21]{Sim21}
Jonathan Simone, \emph{Using rational homology circles to construct rational
  homology balls}, Topology Appl. \textbf{291} (2021), Paper No. 107626, 16.
  \MR{4215138}

\bibitem[Sim23]{Sim23}
\bysame, \emph{Classification of torus bundles that bound rational homology
  circles}, Algebr. Geom. Topol. \textbf{23} (2023), no.~6, 2449--2518.
  \MR{4640131}

\bibitem[Vir73]{Vir73}
O.~Ja. Viro, \emph{Branched coverings of manifolds with boundary, and
  invariants of links. {I}}, Izv. Akad. Nauk SSSR Ser. Mat. \textbf{37} (1973),
  1241--1258. \MR{370605}

\end{thebibliography}
\end{document}